\def\frak{\mathfrak}
\def\Bbb{\mathbb}
\def\Cal{\mathcal}
\let\phi\varphi
\newcommand{\x}{\times}
\renewcommand{\o}{\circ}
\newcommand{\al}{\alpha}
\newcommand{\be}{\beta}
\newcommand{\ep}{\epsilon}
\newcommand{\la}{\lambda}
\newcommand{\om}{\omega}
\newcommand{\ph}{\phi}
\newcommand{\ps}{\psi}
\newcommand{\si}{\sigma}
\newcommand{\Ga}{\Gamma}
\newcommand{\La}{\Lambda}
\newcommand{\Ph}{\Phi}
\newcommand{\Om}{\Omega}
\newcommand{\im}{\operatorname{im}}
\newcommand{\id}{\operatorname{id}}
\newcommand{\pr}{\operatorname{pr}}
\newtheorem*{thm*}{Theorem \thesubsection}
\newtheorem*{lemma*}{Lemma \thesubsection}
\newtheorem*{prop*}{Proposition \thesubsection}
\newtheorem*{cor*}{Corollary \thesubsection}
\theoremstyle{definition}
\newtheorem*{definition*}{Definition \thesubsection}
\newtheorem*{example*}{Example \thesubsection}
\theoremstyle{remark}
\newtheorem*{remark*}{Remark \thesubsection}
\def\sideremark#1{\ifvmode\leavevmode\fi\vadjust{\vbox to0pt{\vss
 \hbox to 0pt{\hskip\hsize\hskip1em
 \vbox{\hsize3cm\tiny\raggedright\pretolerance10000
  \noindent #1\hfill}\hss}\vbox to8pt{\vfil}\vss}}}%
\begin{document}

\title{Pushing down the Rumin complex\\ 
to conformally symplectic quotients} 
\date\today
\author{Andreas \v Cap and Tom\'a\v s Sala\v c}
\thanks{Both authors gratefully acknowledge support by project
  P23244-N13 of the ``Fonds zur F\"orderung der wissenschaftlichen
  For\-schung'' (FWF). \newline
We also want to thank Michael Eastwood, who (after the first version
of this article was submitted) pointed out his article \cite{Eastwood}
to us.}

\address{Faculty of Mathematics\\
University of Vienna\\
Oskar--Morgenstern--Platz 1\\
1090 Wien\\
Austria}
\email{Andreas.Cap@univie.ac.at}
\email{Tomas.Salac@univie.ac.at}

\subjclass[2010]{primary: 58J10; secondary: 35N10, 53D05, 53D10, 58J60}

\begin{abstract}
 Given a contact manifold $M_\#$ together with a transversal
 infinitesimal automorphism $\xi$, we show that any local leaf space
 $M$ for the foliation determined by $\xi$ naturally carries a
 conformally symplectic (cs--) structure. Then we show that the Rumin
 complex on $M_\#$ descends to a complex of differential operators on
 $M$, whose cohomology can be computed. Applying this construction
 locally, one obtains a complex intrinsically associated to any
 manifold endowed with a cs--structure, which recovers the
 generalization of the so--called Rumin--Seshadri complex to the
 conformally symplectic setting. The cohomology of this more general
 complex can be computed using the push--down construction.
\end{abstract}

\maketitle

\pagestyle{myheadings} \markboth{\v Cap and Sala\v c}{Pushing down the
  Rumin complex}
\section{Introduction}\label{1}

This article is motivated by the results \cite{EG} of M.~Eastwood and
H.~Goldschmidt on integral geometry and the subsequent work \cite{ES}
of M.~Eastwood and J.~Slov\'ak on conformally Fedosov structures. The
main tool used in \cite{EG} is a family of complexes of differential
operators on complex projective space $\Bbb CP^n$. The results on
integral geometry are deduced from vanishing of some cohomology groups
of these complexes. The form and length of these complexes is rather
intriguing and the article \cite{ES} takes steps towards an
explanation. The main notion introduced there is the one of a
conformally Fedosov structure, which combines a conformally symplectic
structure and a projective structure, which satisfy a suitable
compatibility condition. Given these data, the authors construct a
tractor bundle endowed with a (linear) tractor connection which is
naturally associated to the conformally Fedosov structure. This should
open the possibility to construct sequences and complexes of
differential operators following the ideas of the
Bernstein--Gelfand--Gelfand (BGG) machinery as introduced in
\cite{CSS-BGG} and \cite{CD} in the setting of parabolic geometries.

The tractor bundle associated to a conformally Fedosov structure looks
similar to the standard tractor bundle associated to a contact
projective structure (see \cite{Fox}). This is an instance of a
so--called \textit{parabolic contact structure}, the best know example
of which are (hypersurface type) CR structures. It is known that the
homogeneous models of parabolic contact structure are, via forming
quotients by transversal infinitesimal automorphisms, related to
special symplectic connections, see \cite{CS} and Sections 5.2.18 and
5.2.19 of \cite{book} for an exposition in the language of parabolic
geometries.

The starting point for our considerations is the hope to obtain
complexes like the ones constructed in \cite{EG} from BGG sequences
associated to parabolic contact structures via similar quotient
constructions. In this article, we show that this can indeed be done
in the special case of the BGG sequence associated to the trivial
representation. It is shown in \cite{BEGN} that in this case one
obtains the Rumin complex (see \cite{Ru}), which can be naturally
constructed for any contact structure, see \cite{BEGN} for a simple
direct construction. From either construction it follows that the
Rumin complex is a fine resolution of the constant sheaf $\Bbb R$, so
in particular it computes the de--Rham cohomology of a contact
manifold. Following this, our article also works in the setting of
general contact manifolds and does not use parabolic geometry
techniques.

We first prove that the quotient of a contact structure by a
transversal infinitesimal automorphism naturally inherits a symplectic
structure, and thus in particular a conformally symplectic structure
(or cs--structure). In contrast to the traditional approach to
defining such a structure via a specific two--form, we just view it as
an appropriate line subbundle in the bundle of two--forms, which
simplifies matters in several respects.

Next, we show that the Rumin complex can be pushed down to a complex
of differential operators on the quotient space, which coincides with
the complex on a symplectic manifold constructed in \cite{TY1,TY2} and
in \cite{BEGN}, where it is called \textit{Rumin--Seshadri} complex,
see also \cite{RuS}. The push--down construction easily leads to a
long exact sequence relating the cohomology of this complex to
de--Rham cohomology. In particular, one can immediately read off (in
this very simple special case) the cohomological information needed in
the applications in \cite{EG}.

To complete the picture, we prove that the push down construction can
be used to construct a version of this complex and an analog of the
long exact sequence on any smooth manifold endowed with a
cs--structure. We prove that any such manifold can be locally
represented as the quotient of a contact structure by a transversal
infinitesimal automorphism. Moreover, any local isomorphism of
cs--structures can be lifted to a contactomorphism
``upstairs''. Naturality of the Rumin complex under contactomorphisms
then implies that one can use the local contactifications to obtain a
complex and a long exact sequence on the whole cs--manifold and that
they are intrinsic to the cs--structure.

The extension of the construction of the Rumin--Seshadri complex from
symplectic manifolds to cs--manifolds is not a new result in its own
right, a direct construction is available in \cite{Eastwood}. The main
advantage of our approach is not the result itself, but the potential
for generalizations.

\section{A quotient of the Rumin complex}\label{2} 

We start by discussing local quotients of contact manifolds by
transverse infinitesimal automorphisms. 

\subsection{Contact manifolds and differential forms}\label{2.1}
By a contact manifold, we will always mean a manifold $M_\#$ of odd
dimension $2n+1$ endowed with a maximally non--integrable distribution
$H\subset TM$ of rank $2n$. While this implies that locally $H$ can be
written as the kernel of a one--form $\al\in\Om^1(M)$ such that
$\al\wedge(d\al)^n$ is nowhere vanishing, we do not initially assume
that such a \textit{contact form} exists globally or that there
locally is a preferred choice.

The best way to conceptually formulate the condition of maximal
non--in\-te\-gra\-bi\-li\-ty is via the \textit{Levi--bracket}. Defining
$Q:=TM_\#/H$, which clearly is a line bundle naturally associated to
any corank one subbundle in the tangent bundle, the Levi--bracket is
the bilinear bundle map $\Cal L:H\x H\to Q$ induced by the Lie bracket
of vector fields. The condition that $H$ defines a contact structure
is than exactly that $\Cal L$ is non--degenerate in each point. In
this case, $\Cal L$ induces an isomorphism $H\cong H^*\otimes Q$ of
vector bundles, whose inverse can be also viewed as a non--degenerate
bilinear bundle map $\Cal L^{-1}:H^*\x H^*\to Q^*$ respectively as an
element of $\La^2H\otimes Q^*$.

We can use the Levi--bracket and its inverse to construct subbundles
in the exterior powers of $H^*$. Indeed, wedging with $\Cal L$ can be
viewed as a bundle map $\La^kH^*\to \La^{k+2}H^*\otimes Q$, while
insertion of $\Cal L^{-1}$ defines a map $\La^kH^*\to
\La^{k-2}H^*\otimes Q^*$. Non--degeneracy implies that the first map
is injective for $k\leq n-1$ and surjective for $k\geq n-1$ (when
$\dim(M_\#)=2n+1$) while the second map is surjective for $k\leq n+1$
and injective for $k\geq n+1$. Thus we can define
$\La^k_0H^*\subset\La^kH^*$ for $k\leq n$ as the kernel of the
insertion of $\Cal L^{-1}$ and for $k>n$ as the kernel of wedging with
$\Cal L$.

Thus, a contact structure induces a filtration on the bundles of
differential forms, as well as a finer decomposition of the bundles
occurring as subquotients of this filtration. The passage from the
de--Rham complex to the Rumin complex can be viewed as a passage from
the bundles of differential forms to some of this subquotient bundles,
which does not change the cohomology. The price one has to pay for
restricting to simpler bundles, is that (at least in one instance) the
exterior derivative has to be replaced by a higher order operator. Let
us collect the information on differential forms on a contact manifold
we will need.

\begin{prop*}
Let $(M_\#,H)$ be a contact manifold and put $Q:=TM_\#/H$. Then for
each $k=1,\dots,2n$ we get an exact sequence 
$$
0\to \La^{k-1}H^*\otimes Q^*\to \La^kT^*M_\#\to\La^kH^*\to 0
$$ 
as well as decompositions
\begin{gather*}
\La^kH^*=\La^k_0H^*\oplus(\La^{k-2}_0H^*\otimes Q^*)\oplus\dots\oplus
(\La^{k-2i}_0H^*\otimes\otimes^i Q^*) \quad \text{\ for $k\leq n$}.\\
\La^kH^*=\La^k_0H^*\oplus (\La^{k+2}_0H^*\otimes Q)\oplus\dots\oplus
(\La^{k+2j}_0H^*\otimes\otimes^j Q) \qquad \text{\ for $k>n$}.
\end{gather*}
Here $i$ is the largest integer such that $2i\leq k$ and $j$ is the
largest integer such that $k+2j\leq 2n$. 
\end{prop*}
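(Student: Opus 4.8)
The plan is to establish the two pieces separately: first the short exact sequence, which comes directly from the filtration of the cotangent bundle by the subbundle $H$, and then the two decompositions, which follow from the representation theory of $\mathfrak{sp}(2n)$ implicit in the Levi-bracket structure.

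Let me think about the short exact sequence first. We have $H \subset TM_\#$ a corank-one subbundle with $Q = TM_\#/H$. Dualizing the sequence $0 \to H \to TM_\# \to Q \to 0$ gives $0 \to Q^* \to T^*M_\# \to H^* \to 0$. The key claim is that taking $\Lambda^k$ produces the stated sequence. The restriction map $\Lambda^k T^*M_\# \to \Lambda^k H^*$ (dual to the inclusion $H \hookrightarrow TM_\#$, or equivalently restriction of forms to $H$) is clearly surjective. Its kernel consists of $k$-forms that vanish when all arguments lie in $H$, which is exactly $Q^* \wedge \Lambda^{k-1} T^*M_\#$. Since $Q^*$ is a line bundle, wedging with a local generator $\theta$ of $Q^* \subset T^*M_\#$ identifies this kernel with the image of $\Lambda^{k-1}(T^*M_\#/Q^*) = \Lambda^{k-1}H^*$ tensored with $Q^*$. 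So I would present this as the standard fact that for a short exact sequence of vector bundles $0 \to L \to E \to F \to 0$ with $L$ a line bundle, one gets $0 \to L \otimes \Lambda^{k-1}F \to \Lambda^k E \to \Lambda^k F \to 0$, applied to $0 \to Q^* \to T^*M_\# \to H^* \to 0$.

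The decompositions are the substantial part, and this is where I expect the main obstacle to lie. The point is that $\mathcal{L}^{-1} \in \Lambda^2 H \otimes Q^*$ is (pointwise, after trivializing $Q$) a nondegenerate element of $\Lambda^2 H$, i.e. the inverse of a symplectic form on the $2n$-dimensional space $H$. Wedging with $\mathcal{L}$ (call it $L$) and insertion of $\mathcal{L}^{-1}$ (call it $\Lambda$, the Lefschetz dual) are precisely the two operators of $\mathfrak{sl}(2)$-representation theory acting on $\Lambda^\bullet H^*$ via the $\mathfrak{sl}(2)$-triple $(L, \Lambda, H_0)$ familiar from Hodge theory on symplectic (or Kähler) manifolds, where $H_0$ is the grading operator $k \mapsto (k-n)$. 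The subbundles $\Lambda^k_0 H^*$ are by definition the primitive forms $\ker \Lambda$ for $k \le n$ (and $\ker L$ for $k > n$). The hard analytic/algebraic content is the symplectic Lefschetz decomposition: every element of $\Lambda^k H^*$ decomposes uniquely as a sum of terms $L^s(\text{primitive of degree } k-2s)$, with the precise range of $s$ dictated by when $L$ and $\Lambda$ cease to be injective/surjective. I would invoke this as the standard $\mathfrak{sl}(2)$ Lefschetz decomposition, verifying only that the bookkeeping of $Q$-weights matches: each application of $L = \mathcal{L}\wedge$ raises the form degree by $2$ and the $Q$-weight by $1$, so $L^s$ on a primitive class in $\Lambda^{k-2s}_0 H^*$ lands in $\Lambda^k H^*$ tensored with $\otimes^s Q$; to reindex this into the stated subbundles of $\Lambda^k H^*$ itself one tensors the whole decomposition by an appropriate power of $Q$ or $Q^*$.

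Concretely, I would argue as follows. Fix a point and a trivialization of $Q$, turning $\mathcal{L}$ into a symplectic form $\omega$ on $H$ and giving the classical $\mathfrak{sl}(2)$-action on $\Lambda^\bullet H^*$. For $k \le n$ the Lefschetz theorem gives $\Lambda^k H^* = \bigoplus_{s \ge 0} L^s(P^{k-2s})$ where $P^{m} = \ker(\Lambda) \cap \Lambda^m H^* = \Lambda^m_0 H^*$ is the space of primitive $m$-forms; the sum runs over $0 \le s$ with $k - 2s \ge 0$, i.e. up to $s = i$, the largest integer with $2i \le k$. Tracking the $Q$-weight picked up by each factor $L$ (which is $\mathcal{L}\wedge$, landing in $Q$ per application, but here we want to land back in $\Lambda^k H^*$ untwisted), the summand $L^s(\Lambda^{k-2s}_0 H^*)$ is canonically isomorphic to $\Lambda^{k-2s}_0 H^* \otimes (\otimes^s Q^*)$ once the $Q$-twist is accounted for, yielding exactly the first displayed decomposition. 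The case $k > n$ is dual: there $L$ is surjective and $\Lambda$ injective, so one decomposes using powers of $L$ applied to primitive forms of degree below $n$, equivalently via the isomorphism $\Lambda^k H^* \cong \Lambda^{2n-k} H^*$ given by $\Lambda^{k-n}$, and the weights now carry factors of $Q$ rather than $Q^*$; the range $s \le j$ with $k + 2j \le 2n$ records the top power of $L$ that remains nonzero. The only genuine care needed is to check that these pointwise decompositions vary smoothly and hence define subbundle decompositions globally, which is immediate since $\mathcal{L}$ and $\mathcal{L}^{-1}$ are smooth bundle maps and the projections onto Lefschetz summands are universal polynomial expressions in $L$ and $\Lambda$.
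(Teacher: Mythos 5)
Your proof is correct, but it reaches the decompositions by a genuinely different route than the paper. The short exact sequence part is essentially identical (the paper also just dualizes $0\to H\to TM_\#\to Q\to 0$ and takes exterior powers; you supply the standard line-subbundle lemma in more detail). For the decompositions, you invoke the full $\mathfrak{sl}(2)$ Lefschetz theory on $\Lambda^\bullet H^*$, decomposing into summands $L^s(P^{k-2s})$ and then tracking $Q$-weights. The paper instead runs a short, self-contained induction: insertion of $\mathcal{L}^{-1}$ is a surjection $\Lambda^kH^*\to\Lambda^{k-2}H^*\otimes Q^*$ whose kernel is $\Lambda^k_0H^*$ by definition, wedging with $\mathcal{L}$ splits this sequence, so $\Lambda^kH^*\cong\Lambda^k_0H^*\oplus(\Lambda^{k-2}H^*\otimes Q^*)$, and iterating on the second summand gives the first decomposition; exchanging the roles of the two maps gives the second. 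The paper's argument is more elementary --- it uses only the injectivity/surjectivity ranges of the two bundle maps stated before the proposition, never the $\mathfrak{sl}(2)$ commutation relations, string lengths, or the hard-Lefschetz isomorphism $\Lambda^kH^*\cong\Lambda^{2n-k}H^*\otimes(\otimes^{k-n}Q^*)$ that you need for $k>n$. Your approach buys canonical projectors (universal polynomials in $L$ and $\Lambda$), making smoothness automatic and connecting to symplectic Hodge theory, but it obliges you to translate between conventions: for $m>n$ the paper's $\Lambda^m_0H^*=\ker(\mathcal{L}\wedge\,\cdot\,)$ is the \emph{coprimitive} (highest-weight) space, equal up to twist to $L^{m-n}P^{2n-m}$, so your summands $L^sP^{k-2s}$ with $s\ge k-n$ must be reindexed via $j=s-(k-n)$ to match the stated $\Lambda^{k+2j}_0H^*\otimes\otimes^jQ$ --- a step you only gesture at (``equivalently via the isomorphism given by $\Lambda^{k-n}$''), though it does go through, and your earlier remark that the weights for $k>n$ carry factors of $Q$ rather than $Q^*$ shows you are aware of the twist you omitted from that displayed isomorphism.
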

\begin{proof}
  The first statement follows immediately from dualizing the exact
  sequence defining $Q$ and then taking exterior powers. For the first
  decomposition, we use insertion of $\Cal L^{-1}$ to obtain a
  surjection from $\La^kH^*$ onto $\La^{k-2}H^*\otimes Q^*$ with
  kernel $\La^k_0H^*$. The wedge product with $\Cal L$ defines a
  splitting of the corresponding exact sequence, which shows that
  $\La^kH^*\cong \La^k_0H^*\oplus(\La^{k-2}H^*\otimes Q^*)$. From this
  the statement follows by induction. For the second decomposition,
  one argues in the same way exchanging the roles of the two maps.
\end{proof}

\subsection{Cs--quotients of contact manifolds}\label{2.2}

\begin{definition*}
  Let $(M_\#,H)$ be a contact manifold. A \textit{transversal
    infinitesimal automorphism} of $M$ is a vector field $\xi\in\frak
  X(M)$ such that $\xi(x)\notin H_x$ for all $x\in M_\#$ and such that
  the local flow of $\xi$ preserves the contact distribution.
\end{definition*}

Note that in particular $\xi$ is nowhere vanishing on $M_\#$, thus
defining a foliation of $M_\#$ with one--dimensional leaves. On the
other hand, the condition that the flow of $\xi$ preserves the contact
distribution is easily seen to be equivalent to the fact that for any
$\eta\in\Ga(H)\subset\frak X(M_\#)$ the Lie bracket $[\xi,\eta]$ also
lies in $\Ga(H)$.  

Next, we define a \textit{quotient} of a contact manifold $M_\#$ by a
transversal infinitesimal automorphism $\xi\in\frak X(M_\#)$ to be a
surjective submersion $q:M_\#\to M$ onto a smooth manifold $M$ of
dimension $2n$ such that for each $x\in M_\#$ the kernel of
$T_xq:T_xM_\#\to T_{q(x)}M$ is spanned by $\xi(x)$ and such that the
fibers of $q$ are connected. 

This simply means that $M$ is a (global) space of leaves for the
foliation defined by $\xi$. Local existence of such leaf spaces is a
consequence of the Frobenius theorem, and this is all we will formally
need in the sequel, since our results are local in nature. A global
notion is only used in order to be able to work with larger leaf space
than the ones provided by the Frobenius theorem in case that they are
available. First, we show that the quotient space naturally inherits a
symplectic structure. For later use it will be more useful to
emphasize the conformally symplectic aspects of the construction,
where we view a \textit{conformally symplectic} structure as a line
subbundle $\ell$ in the bundle of two--forms, whose non--zero elements
are all non--degenerate and which locally admits sections which are
closed as two--forms.

\begin{prop*}
  Let $q:M_\#\to M$ be a quotient of a contact manifold $(M_\#,H)$ by
  a transversal infinitesimal automorphism $\xi$. Then we have:

(1) For each $x\in M_\#$, the tangent map $T_xq$ restricts to a linear
  isomorphism $H_x\to T_{q(x)}M$. Via this isomorphism, $\Cal L$
  determines a line subbundle $\ell\subset\La^2T^*M$ such that each
  non--zero element of $\ell$ is non--degenerate as a bilinear form on
  the corresponding tangent space.

(2) There is a unique contact form $\al$ on $M_\#$ such that
  $\al(\xi)=1$ and a unique symplectic form $\om$ on $M$ such that
  $q^*\om=d\al$. This form $\om$ is a section of
  $\ell\subset\La^2T^*M$, so $\ell$ defines a conformally
  symplectic structure on $M$.
\end{prop*}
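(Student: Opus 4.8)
The plan is to establish the fibrewise isomorphism first, use it to transport the Levi--bracket to a line of two--forms on $M$, and then produce the distinguished section $\om$ from a flow--invariant contact form. For (1), transversality gives $T_xM_\#=H_x\oplus\Bbb R\xi(x)$ at every point, so the kernel $\Bbb R\xi(x)$ of $T_xq$ meets $H_x$ only in $0$; hence $T_xq|_{H_x}$ is injective and, by equality of dimensions, a linear isomorphism onto $T_{q(x)}M$. The Levi--bracket is a section of $\La^2H^*\otimes Q$, and since $\xi$ projects to a nowhere--vanishing section of the line bundle $Q$, it trivializes $Q$; reading $\Cal L$ off through this trivialization turns it into a two--form on each $H_x$, non--degenerate by the contact condition and canonically defined up to the scaling inherent in the choice of trivialization. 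Thus $\Cal L$ determines a well--defined line of two--forms on $H_x$, which I would transport through $T_xq|_{H_x}$ to a line $\ell_{q(x)}\subset\La^2T^*_{q(x)}M$ of non--degenerate forms.

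The hard part is showing that $\ell_{q(x)}$ depends only on $q(x)$, not on $x$, so that $\ell$ is a genuine subbundle of $\La^2T^*M$. I would prove this using the flow $\ph_t$ of $\xi$: since $q\o\ph_t=q$, the relation $T_{\ph_t(x)}q\o T_x\ph_t=T_xq$ shows that the isomorphism $H_x\to T_{q(x)}M$ factors through $T_x\ph_t:H_x\to H_{\ph_t(x)}$ (which preserves $H$ as $\xi$ is an infinitesimal automorphism) followed by $T_{\ph_t(x)}q$. Naturality of the Levi--bracket under the contactomorphism $\ph_t$ then forces the two--forms on $T_{q(x)}M$ induced from $\Cal L_x$ and from $\Cal L_{\ph_t(x)}$ to differ only by the linear isomorphism $T_x\ph_t:Q_x\to Q_{\ph_t(x)}$ of the one--dimensional trivializing bundle, hence to be proportional. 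As the fibers of $q$ are connected, any two of their points are joined by the flow, so the line is constant along fibers; smoothness is then clear, yielding the line subbundle $\ell$.

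For (2), I would define $\al$ by $\al|_H=0$ and $\al(\xi)=1$; this is the unique one--form with kernel $H$ and value $1$ on $\xi$, and it is a contact form because $H$ is contact. Invariance follows by this uniqueness: $\ph_t^*\al$ again annihilates $H$ (as $\ph_t$ preserves $H$) and satisfies $(\ph_t^*\al)(\xi)=\al(T\ph_t\cdot\xi)=\al(\xi)=1$ since $T_x\ph_t\,\xi(x)=\xi(\ph_t(x))$, so $\ph_t^*\al=\al$, i.e.\ $L_\xi\al=0$. Cartan's formula together with $i_\xi\al=1$ then gives $i_\xi d\al=L_\xi\al-d\,i_\xi\al=0$, while $L_\xi d\al=d\,L_\xi\al=0$. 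Thus $d\al$ is horizontal and invariant, hence basic, so (the fibers being connected) it descends to a unique $\om\in\Om^2(M)$ with $q^*\om=d\al$; injectivity of $q^*$ on forms yields both uniqueness of $\om$ and, from $q^*d\om=d\,d\al=0$, closedness $d\om=0$.

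It remains to identify $\om$ as a section of $\ell$. For $\eta_1,\eta_2\in\Ga(H)$ one has $d\al(\eta_1,\eta_2)=-\al([\eta_1,\eta_2])$, which by $\al(\xi)=1$ and $\al|_H=0$ is exactly $-\Cal L(\eta_1,\eta_2)$ read off through the $\xi$--trivialization of $Q$. Hence, transported by $T_xq|_{H_x}$, the form $\om_{q(x)}$ is a non--zero multiple of the representative of $\Cal L_x$, so $\om$ is a nowhere--vanishing section of $\ell$; in particular $\om$ is non--degenerate, confirming that it is symplectic, and being closed it exhibits $\ell$ as a conformally symplectic structure.
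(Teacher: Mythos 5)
Your proposal is correct and follows essentially the same route as the paper: the fibrewise isomorphism $H_x\to T_{q(x)}M$, constancy of the line along fibers via naturality of $\Cal L$ under the flow of $\xi$, the contact form $\al$ characterized by $\al|_H=0$ and $\al(\xi)=1$, descent of $d\al$ as a basic form, and the identification of $d\al|_{\La^2H}$ with (a trivialized multiple of) the Levi bracket. The only cosmetic difference is that you obtain $i_\xi d\al=0$ and $L_\xi d\al=0$ from the flow invariance $\ph_t^*\al=\al$ and Cartan's formula, whereas the paper computes $d\al(\xi,\eta)=-\al([\xi,\eta])=0$ directly from the infinitesimal automorphism property -- both arguments are equivalent and equally valid.
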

\begin{proof} 
(1) By definition, the kernel of $T_xq$ is the line spanned by
  $\xi(x)$, which has zero intersection with $H_x$ so the first claim
  follows. Fixing $x$, $\Cal L_x:\La^2H_x\to Q_x$ can be viewed as a
  non--degenerate bilinear form on $H_x$ determined up to scale. Via
  the isomorphism $H_x\to T_{q(x)}M$, it thus determines a line in
  $\La^2T_{q(x)}^*M$ whose non--zero elements are
  non--degenerate. Since $\xi$ is an infinitesimal automorphism, its
  flow preserves the contact distribution. Hence the tangent maps of
  its flow induce bundle maps $H\to H$ and $Q\to Q$. Naturality of the
  Levi--bracket implies that these bundle maps are compatible with
  $\Cal L$, and since the flow lines of $\xi$ are the fibers of $q$,
  we conclude that the line in $\La^2T_{q(x)}^*M$ constructed above
  only depends on $q(x)$ and not on $x$. Choosing a local section of
  $q$, one easily concludes that this defines a smooth line subbundle
  $\ell\subset\La^2T^*M$.

(2) By definition, $\xi$ defines a nowhere--vanishing section of the
  bundle $Q=TM_\#/H$, thus trivializing this line bundle. Hence there
  is a unique section of $Q^*$ pairing to one with that section. Since $Q^*$
  can be identified with the annihilator of $H$ in $T^*M_\#$ this
  section can be viewed as a contact form $\al$ such that
  $\al(\xi)=1$. Of course, this uniquely determines $\al$ among all
  contact forms. Since $\xi$ is an infinitesimal automorphism, we must
  have $0=\al([\xi,\eta])=d\al(\xi,\eta)$ for all $\eta\in\Ga(H)$. Of
  course $d\al(\xi,\xi)=0$, so $i_\xi d\al=0$. This also shows that
  $L_\xi d\al=0$, where $L_\xi$ denotes the Lie derivative along
  $\xi$. By Corollary 2.3 in \cite{BCG3} the last two properties imply
  that there is a two--form $\om$ on $M$ such that $d\al=q^*\om$. In
  particular, this shows that $0=dq^*\om=q^*d\om$ and since $q$ is a
  surjective submersion, this implies that $\om$ is closed and
  uniquely determined.

  Finally, the restriction of $d\al$ to $\La^2H$ can be written as the
  composition of the trivialization of $Q^*$ defined by evaluating
  elements on $\xi$ with the Levi bracket. This implies that $\om$ is
  a section of $\ell$, which completes the proof.
\end{proof}

Having the cs--structure $\ell$ on $M$, we get decompositions of the
spaces of differential forms similarly to the ones in Proposition
\ref{2.1}. We first observe that the inclusion $\ell\hookrightarrow
\La^2T^*M$ can be viewed as a canonical section $\mathbf{\Om}$ of
$\La^2T^*M\otimes\ell^*$. Non--degeneracy implies that $\mathbf{\Om}$
defines an isomorphism $TM\to T^*M\otimes\ell^*$, whose inverse can be
viewed as a canonical section $\mathbf{\Om}^{-1}$ of the bundle
$\La^2TM\otimes\ell$. In particular, wedging by $\mathbf{\Om}$ defines
a bundle map $\La^kT^*M\to\La^{k+2}T^*M\otimes \ell^*$ while insertion of
$\mathbf{\Om}^{-1}$ induces a bundle map
$\La^kT^*M\to\La^{k-2}T^*M\otimes\ell$. As before, non--degeneracy
implies that the first map is injective for $k\leq n-1$ and surjective for
$k\geq n-1$ while the second map is surjective for $k\leq n+1$ and
injective for $k\geq n+1$. Similarly to \ref{2.1}, we define
$\La^k_0T^*M$ to be the kernel of the insertion of $\mathbf{\Om}^{-1}$
for $k\leq n$ and the kernel of wedging by $\mathbf{\Om}$ for $k>n$.

\begin{cor*}The conformally symplectic structure $\ell$ on $M$ gives
  rise to decompositions 
\begin{gather*}
\La^kT^*M=\La^k_0T^*M\oplus(\La^{k-2}_0T^*M\otimes
\ell)\oplus\dots\oplus (\La^{k-2i}_0T^*M\otimes\otimes^i\ell)
\quad\text{\ for $k\leq
  n$}.\\ \La^kT^*M=\La^k_0T^*M\oplus(\La^{k+2}_0T^*M\otimes
\ell^*)\oplus\dots\oplus (\La^{k+2j}_0T^*M\otimes\otimes^j\ell)
\quad\text{\ for $k>n$}.
\end{gather*}
Here $i$ is the largest integer such that $2i\leq k$ and $j$ is the
largest integer such that $k+2j\leq 2n$. 

Moreover, for each $r$, the linear maps induced by $T_xq$ restrict to
linear isomorphisms $\La^r_0H_x^*\cong \La^r_0T_{q(x)}^*M$.
\end{cor*}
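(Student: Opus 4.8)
The plan is to prove the Corollary in two parts: first the decompositions, then the compatibility with $T_xq$. The decompositions should follow by exactly the same argument as in the Proposition of \ref{2.1}, since the entire setup on $M$ parallels the contact setup on $M_\#$: we have a non-degenerate bilinear object (there $\Cal L$, here $\mathbf\Om$) valued in a line bundle (there $Q$, here $\ell^*$), yielding injective/surjective wedging and insertion maps with the same numerical ranges. First I would observe that insertion of $\mathbf\Om^{-1}$ gives a surjection $\La^kT^*M\to\La^{k-2}T^*M\otimes\ell$ with kernel $\La^k_0T^*M$ for $k\le n$, and that wedging with $\mathbf\Om$ provides a splitting, giving $\La^kT^*M\cong\La^k_0T^*M\oplus(\La^{k-2}T^*M\otimes\ell)$. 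Iterating this, exactly as the induction in \ref{2.1}, yields the first displayed decomposition; the case $k>n$ is symmetric with the roles of the two maps exchanged. At this level of detail I would simply say the argument is formally identical to the proof of the Proposition in \ref{2.1}.

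The real content is the last assertion, the isomorphism $\La^r_0H_x^*\cong\La^r_0T^*_{q(x)}M$. Here I would start from part (1) of the previous Proposition: $T_xq$ restricts to an isomorphism $H_x\to T_{q(x)}M$, hence dually to an isomorphism $T^*_{q(x)}M\to H^*_x$, and taking exterior powers gives $\La^rT^*_{q(x)}M\cong\La^rH^*_x$. The point is to check that this isomorphism carries the subspace $\La^r_0T^*_{q(x)}M$ onto $\La^r_0H^*_x$. Since both these subbundles are defined as kernels of maps built from $\mathbf\Om$ (respectively $\Cal L$) and their inverses, the key is that these defining structures correspond under $T_xq$.

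The main obstacle — and the crux of the proof — is verifying this correspondence of structures. By the construction of $\ell$ in part (1) of the Proposition, the line $\ell_{q(x)}\subset\La^2T^*_{q(x)}M$ is precisely the image of the line $\La^2 H^*_x\otimes Q_x$-object $\Cal L_x$ under the dual isomorphism; more concretely, part (2) shows that the section $\om$ of $\ell$ pulls back so that $\om$ restricted to $\La^2T_{q(x)}M$ corresponds, under $T_xq$, to $\Cal L_x$ composed with the trivialization of $Q^*_x$ by $\xi$. Thus the canonical section $\mathbf\Om$ of $\La^2T^*M\otimes\ell^*$ corresponds under $T_xq$ to $\Cal L$ viewed in $\La^2H^*\otimes Q$ (the identification of $\ell$ with $Q$ being induced by $\xi$), and consequently $\mathbf\Om^{-1}$ corresponds to $\Cal L^{-1}$. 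Wedging and insertion are natural under the linear isomorphism $H_x\cong T_{q(x)}M$, so the defining kernel conditions match up on the nose.

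Once this correspondence of $\mathbf\Om,\mathbf\Om^{-1}$ with $\Cal L,\Cal L^{-1}$ is in hand, the desired isomorphism is immediate: the map $\La^rT^*_{q(x)}M\to\La^rH^*_x$ intertwines insertion of $\mathbf\Om^{-1}$ with insertion of $\Cal L^{-1}$ (for $r\le n$) and wedging with $\mathbf\Om$ with wedging with $\Cal L$ (for $r>n$), so it sends kernels to kernels, restricting to the claimed isomorphism $\La^r_0T^*_{q(x)}M\cong\La^r_0H^*_x$. I would close by noting that the smoothness and the fact that this is a genuine bundle isomorphism follows from choosing a local section of $q$, exactly as at the end of part (1) of the Proposition.
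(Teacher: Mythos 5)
Your proposal is correct and follows essentially the same route as the paper, which simply notes that the decompositions are proved exactly as in Proposition \ref{2.1} and that the last claim follows from the proof of part (1) of Proposition \ref{2.2}; your write-up just makes explicit the correspondence $\mathbf{\Om}\leftrightarrow\Cal L$, $\mathbf{\Om}^{-1}\leftrightarrow\Cal L^{-1}$ (via the induced identification of $\ell$ with $Q^*$) that the paper leaves implicit. The only cosmetic point is that the identification pairs $\ell$ with $Q^*$ (equivalently $\ell^*$ with $Q$) rather than ``$\ell$ with $Q$'', but since the kernels defining $\La^r_0$ are unchanged under rescaling, this does not affect the argument.
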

\begin{proof}
The decompositions are proved exactly as in the proof of Proposition
\ref{2.1}. The last statement follows readily from the proof of
part (1) of Proposition \ref{2.2}.
\end{proof}

\subsection{Pushing down the Rumin complex}\label{2.3}
For any contact manifold $(M_\#,H)$, the filtration of the cotangent
bundle from Proposition \ref{2.3} makes the de--Rham complex
$(\Om^*(M_\#),d)$ into a ($2$--step) filtered complex. This is
determined by the subspaces $\Cal F_k\subset\Om^k(M_\#)$ formed by
those $k$--forms which vanish if all their entries are from $H\subset
TM_\#$. This means that $\Cal F_k=\Ga(\La^{k-1}H^*\otimes Q^*)$ and
$\Om^k(M_\#)/\Cal F_k\cong\Ga(\La^kH^*)$, with the last isomorphism
induced by restricting $k$--forms to multilinear maps defined on $H$. 

The simplest construction of the Rumin complex comes from the spectral
sequence associated to this filtration of the de--Rham complex, see
\cite{BEGN}. It is easy to see that restricting the exterior
derivative to $\Cal F_k$ and then composing with the projection
$\Om^{k+1}(M_\#)/\Cal F_{k+1}$ one obtains a tensorial operator
induced by the bundle map 
\begin{equation}\label{part}
\partial:\La^{k-1}H^*\otimes Q^*\to\La^{k+1}H^*,
\end{equation}
which (up to a non--zero factor) is just the composition of the
alternation with $\Cal L^*:Q^*\to\La^2H^*$ tensorized by
$\id_{\La^{k-1}H^*}$. Hence linear algebra implies that if
$\dim(M_\#)=2n+1$ the bundle map $\partial$ in \eqref{part} is
injective if $k\leq n$ and surjective if $k\geq n$. In particular,
from Proposition \ref{2.1} we see that the cohomology bundle $\Cal
H^k_\#:=\ker(\partial)/\im(\partial)$ of $\partial$ in degree $k$ is
$\La^k_0H^*$ for $k\leq n$ and $\La^{k-1}_0H^*\otimes Q^*$ for $k>n$.

Now the standard construction of spectral sequences provides operators
$D_k:\Ga(\Cal H^k_\#)\to\Ga(\Cal H^{k+1}_\#)$, which are easily seen to be
differential operators forming a complex. This is the Rumin complex,
which by construction computes the de--Rham cohomology of $M_\#$. 

\medskip

Now assume that we have given quotient $q:M_\#\to M$ of $M_\#$ by a
transversal infinitesimal automorphism $\xi\in\frak X(M_\#)$. Then we
know from \ref{2.2} that $M$ inherits a conformally symplectic
structure $\ell\subset\La^2T^*M$. The idea for pushing down the Rumin
complex to a complex of differential operators on $M$ is easy. One
just restricts to the subsheaves of forms in the kernel of the Lie
derivative $L_\xi$.

Observe first, that the fact that $\xi$ is an infinitesimal
automorphism of the contact manifold $(M_\#,H)$ immediately implies
that $L_\xi(\Cal F_k)\subset\Cal F_k$ for all $k$. We have noted
already that one obtains a well defined Lie derivative on the bundles
$H$ and $Q$ and thus on their duals and all bundles obtained by
tensorial constructions from them. Naturality of the Lie derivative
then implies that, for each $k$, we get a commutative diagram with
exact rows
$$
\begin{CD}
0 @>>> \Ga(\La^{k-1}H^*\otimes Q^*) @>>> \Om^k(M_\#) @>>>
\Ga(\La^kH^*) @>>> 0\\
@. @VL_\xi VV  @VL_\xi VV  @VL_\xi VV @.\\
 0 @>>> \Ga(\La^{k-1}H^*\otimes Q^*) @>>> \Om^k(M_\#) @>>>
\Ga(\La^kH^*) @>>> 0
\end{CD}
$$ 
Since $L_\xi$ commutes with the exterior derivative, we conclude from
the above construction that also $L_\xi\o\partial=\partial\o L_\xi$
and $D_k\o L_\xi=L_\xi\o D_k$. Hence we conclude that $D_k$ induces an
operator mapping $\ker(L_\xi)\subset\Ga(\Cal H^k_\#)$ to
$\ker(L_\xi)\subset\Ga(\Cal H^{k+1}_\#)$. Now we can determine
explicitly, what these kernels look like. For $k\geq 0$, we denote the
space of sections of $\La^k_0T^*M$ by $\Om^k_0(M)$.

\begin{prop*}
Let $q:M_\#\to M$ be a quotient of a contact manifold $(M_\#,H)$ by a
transversal infinitesimal automorphism $\xi\in\frak X(M)$. Then for
any $k$ we have the following isomorphisms
\begin{center}
\begin{tabular}{|l|l|l|}
\hline
Space & $\ker(L_\xi)$ isomorphic to & isomorphism \\
\hline
$\Ga(\La^kH^*)$ & $\Om^k(M)$ & $\ph\mapsto q^*\ph|_{\La^kH}$ \\
\hline
$\Ga(\La^k_0H^*)$ & $\Om^k_0(M)$ & $\ph\mapsto q^*\ph|_{\La^kH}$  \\
\hline
$\Ga(\La^kH^*\otimes Q^*)$ & $\Om^k(M)$ &
$\ph\mapsto\al\wedge q^*\ph$ \\
\hline
$\Ga(\La^k_0H^*\otimes Q^*)$ & $\Om^k_0(M)$ &
$\ph\mapsto\al\wedge q^*\ph$  \\
\hline
$\Om^k(M_\#)$ & $\Om^k(M)\oplus\Om^{k-1}(M)$ &
$(\ph_1,\ph_2)\mapsto q^*\ph_1+\al\wedge q^*\ph_2$  \\
\hline
\end{tabular}
\end{center}
In this table, $\al$ denotes the unique contact form on $M_\#$ such
that $\al(\xi)=1$ and the isomorphisms map the spaces in the second
column to the kernel of $L_\xi$ contained in the space in the first
column. 
\end{prop*}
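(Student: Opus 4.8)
The plan rests on one standard fact about the submersion $q$: because its fibers are connected with tangent lines spanned by $\xi$, a form $\be\in\Om^k(M_\#)$ equals $q^*\ph$ for a (necessarily unique) $\ph\in\Om^k(M)$ if and only if it is \emph{basic}, i.e. $i_\xi\be=0$ and $L_\xi\be=0$. This is exactly the fact used for $d\al$ in the proof of Proposition \ref{2.2}(2), now applied in each degree. I will use throughout that $L_\xi\al=i_\xi d\al+d(i_\xi\al)=0$ by Proposition \ref{2.2}(2), that $L_\xi q^*\ph=0$ for all $\ph$ (since $i_\xi q^*\ph=0$ and $i_\xi q^*d\ph=0$), and the splitting $TM_\#=H\oplus\langle\xi\rangle$ given by $\al$, with projection $\pi_H(X)=X-\al(X)\xi$; in particular a horizontal form ($i_\xi\,\cdot=0$) is determined by its restriction to $\La^kH$.

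For each row I would argue in three steps. First, every displayed map lands in $\ker(L_\xi)$: this is immediate from $L_\xi\al=0$ and $L_\xi q^*\ph=0$, together with the fact that restriction to $\La^kH$ (the quotient projection $\Om^k(M_\#)\to\Ga(\La^kH^*)$) and wedging with $\al$ commute with $L_\xi$. Second, injectivity: if $q^*\ph|_{\La^kH}=0$ then $\ph=0$ since $T_xq$ restricts to an isomorphism $H_x\to T_{q(x)}M$ (Proposition \ref{2.2}(1)) and $q$ is onto; if $\al\wedge q^*\ph=0$, then $i_\xi$ yields $q^*\ph=0$, so $\ph=0$. The content lies in the third step, surjectivity onto the relevant kernel.

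The master computation is the last row. For arbitrary $\be\in\Om^k(M_\#)$ I would set $\ga:=i_\xi\be$ and $\be_0:=\be-\al\wedge\ga$; a short check using $i_\xi\al=1$ gives $i_\xi\be_0=0$, so $\be=\be_0+\al\wedge\ga$ splits $\be$ into a horizontal part and an $\al$--part. When $L_\xi\be=0$, the commutation $[L_\xi,i_\xi]=i_{[\xi,\xi]}=0$ gives $L_\xi\ga=i_\xi L_\xi\be=0$, and then $L_\xi\be_0=0$ as well; hence $\ga$ and $\be_0$ are both basic and descend to $\ph_2\in\Om^{k-1}(M)$ and $\ph_1\in\Om^k(M)$ with $\be=q^*\ph_1+\al\wedge q^*\ph_2$ (injectivity of this map follows by contracting with $\xi$ twice). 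Rows one and three are the two halves of this: for row one I lift $\ps\in\ker(L_\xi)\subset\Ga(\La^kH^*)$ to the unique horizontal $\tilde\ps$ with $\tilde\ps|_{\La^kH}=\ps$, and for row three I use that $\Ga(\La^kH^*\otimes Q^*)=\Cal F_{k+1}$ and that every $\be\in\Cal F_{k+1}$ equals $\al\wedge i_\xi\be$, so the surjection reduces to descending $i_\xi\be$.

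The crux, and the one point needing genuine care, is checking that these horizontal lifts are \emph{basic}. For row one, $i_\xi\tilde\ps=0$ by construction and $i_\xi L_\xi\tilde\ps=L_\xi i_\xi\tilde\ps=0$, so $L_\xi\tilde\ps$ is again horizontal; since its restriction to $\La^kH$ equals $L_\xi\ps=0$ and horizontal forms are determined by that restriction, $L_\xi\tilde\ps=0$, whence $\tilde\ps=q^*\ph$ with $q^*\ph|_{\La^kH}=\ps$. Row three is identical with $i_\xi\be$ in place of $\tilde\ps$. Finally I would obtain rows two and four by restricting the isomorphisms of rows one and three to primitive parts: by the Corollary in \ref{2.2} the maps induced by $T_xq$ give $\La^r_0H^*\cong\La^r_0T^*M$, and the subbundles $\La^k_0H^*$ are $L_\xi$--invariant because the Levi bracket, hence the primitive decomposition, is preserved by the flow of $\xi$ (Proposition \ref{2.2}(1)). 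Thus the same formulas carry $\Om^k_0(M)$ isomorphically onto the $L_\xi$--kernels in $\Ga(\La^k_0H^*)$ and $\Ga(\La^k_0H^*\otimes Q^*)$.
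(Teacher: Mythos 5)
Your proposal is correct and takes essentially the same route as the paper's proof: both rest on the characterization of basic forms ($\be=q^*\ph$ iff $i_\xi\be=0$ and $L_\xi\be=0$, via Corollary 2.3 of \cite{BCG3} and connectedness of the fibers), establish the last row by the splitting $\be=(\be-\al\wedge i_\xi\be)+\al\wedge i_\xi\be$, prove the first and third rows by the identical horizontal-lift argument (a horizontal form with $L_\xi$-closed, vanishing restriction to $\La^kH$ is itself $L_\xi$-closed, hence basic), and obtain the second and fourth rows by restriction to the primitive subbundles using the isomorphisms $\La^r_0H^*_x\cong\La^r_0T^*_{q(x)}M$. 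The only (harmless) difference is presentational: you spell out the $L_\xi$-invariance of $\La^k_0H^*$ via naturality of the Levi bracket, where the paper simply calls this step straightforward.
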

\begin{proof}
The quickest way to prove this is via the well known characterization
of pullback forms on $M_\#$, see Corollary 2.3 in \cite{BCG3}, which
applies by connectedness of the fibers of $q$. In our situation this
says that a form $\ps\in\Om^k(M_\#)$ is of the form $q^*\ph$ for some
$\ph\in\Om^k(M)$ if and only if $i_\xi\ps=0$ and $L_\xi\ps=0$. Since
$q$ is a surjective submersion, the form $\ph$ is then uniquely
determined by $\ps$.

Starting with $\ps\in\Om^k(M_\#)$ such that $L_\xi\ps=0$ and taking
into account that $i_\xi$ and $L_\xi$ commute, we can use this to
conclude that $i_\xi\ps\in\Om^{k-1}(M_\#)$ equals $q^*\ph_2$ for a
unique $\ph_2\in\Om^k(M)$. Then $\ps-\al\wedge q^*\ph_2$ by
construction lies in the kernel of $i_\xi$ and since $L_\xi\al=0$, it
is of the form $q^*\ph_1$ for a unique form $\ph_1\in\Om^k(M)$. Since
the converse inclusion is obvious this establishes the last
isomorphism.

Next, given $\ph\in\Om^k(M)$ we can form $q^*\ph\in\Om^k(M_\#)$ and
project it to the quotient $\Ga(\La^kH^*)$ to obtain a section $\si$,
which by construction satisfies $L_\xi\si=0$. Conversely, given such a
section $\si$, we can extend it arbitrarily to a smooth form
$\tilde\ps\in\Om^k(M_\#)$ and then define $\ps:=\tilde\ps-\al\wedge
i_\xi\tilde\ps$. By construction, this is still an extension of $\si$
and $i_\xi\ps=0$. (Indeed, it is uniquely determined by these two
properties.) Moreover, we also get $i_\xi L_\xi\ps=0$. But since $\ps$
projects to $\si$, naturality of the Lie derivative implies that
$L_\xi\ps$ projects to $L_\xi\si=0$ in $\Ga(\La^kH^*)$. Thus we
conclude that $L_\xi\ps=0$, so $\ps=q^*\ph$ for some $\ph\in\Om^k(M)$
and the first isomorphism is established.

From the construction and the definitions it is clear, that in the
above we can restrict to $\ph\in\Ga(\La^k_0T^*M)$ and then the
restriction of $q^*\ph$ will have values in
$\Ga(\La^k_0H^*)$. Conversely, given $\si\in\Ga(\La^k_0H^*)$, the
above construction leads to $\ph\in\Ga(\La^k_0T^*M)$, so the second
isomorphism follows as above.

Finally, $\Ga(\La^kH^*\otimes Q^*)\subset\Om^{k+1}(M_\#)$ consists of
those forms which vanish if all their entries are from $H$. If $\ps$
is such a form, then $\ps=\al\wedge i_\xi\ps$. As above, $L_\xi\ps=0$
implies $L_\xi i_\xi\ps=0$, so $i_\xi\ps=q^*\ph$ for some
$\ph\in\Om^k(M)$. Since the converse inclusion is obvious, the third
isomorphism is proved. The passage to $\La^k_0H^*\otimes Q^*$ upstairs
and $\La^k_0T^*M$ downstairs needed to obtain the fourth isomorphism
is again straightforward.
\end{proof}

\begin{remark*}
In view of potential generalizations, we want to point out that the
first four isomorphisms claimed in the theorem can actually be proved
directly without passing through differential forms (and in more
general situations such proofs will be simpler). The fact that passing
through forms is more efficient is only due to the simplicity of the
filtration of the involved bundles.  
\end{remark*}

From this result it is clear that the Rumin complex on $M_\#$ descends
to a complex on $M$. To understand which cohomology this complex
computes, we only have to interpret what we have done in terms of
sheaf theory. Of course, we can view the Lie derivative $L_\xi$ as an
endomorphism of the sheaf of $k$--forms on $M_\#$ and thus its kernel
as a subsheaf $\Cal A^k$ of the sheaf of $k$--forms on $M_\#$. Via the
map $q:M_\#\to M$, one forms a sheaf $q_*\Cal A^k$ on $M$, which by
definition to an open subset $U$ of $M$ associates $\Cal
A^k(q^{-1}(U))$. Since the exterior derivative (on $M_\#$) is a
morphism $\Cal A^k$ to $\Cal A^{k+1}$, we obtain a complex of sheaves
$(q_*\Cal A^*,d)$ on $M$.

\begin{cor*}
Via the isomorphisms from Proposition \ref{2.3}, the Rumin complex on
$M_\#$ descends to a complex $(\Cal H^i,D_i)$ of differential
operators where $\Cal H^i=\Om^i_0(M)$ for $i\leq n$ and $\Cal
H^i=\Om^{i-1}_0(M)$ for $i>n$. The operators $D_i$ are all of first
order, except for $D_n:\Om^n_0(M)\to\Om^n_0(M)$ which has order two.

The cohomology of this complex coincides with the sheaf cohomology of
the complex $(q_*\Cal A^*,d)$ of sheaves on $M$.
\end{cor*}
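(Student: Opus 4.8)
The plan is to realize the pushed--down complex as the output of the same spectral--sequence construction that yields the Rumin complex, but applied to the $L_\xi$--invariant (equivalently, pushed--forward) filtered de--Rham complex, and then to read off the cohomology from convergence of that spectral sequence. First I would reduce both sides to global invariant cohomology. On the one hand, the operators $D_i$ were already seen to preserve $\ker(L_\xi)$, and Proposition \ref{2.3} identifies $\ker(L_\xi)\subset\Ga(\Cal H^i_\#)$ with $\Ga(\Cal H^i)$; hence the pushed--down complex is, as a complex, the invariant Rumin complex $(\Ga(\Cal H^\bullet_\#)^{L_\xi},D)$, and the quantity to be computed is its cohomology. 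On the other hand, each $q_*\Cal A^k$ is a sheaf of modules over $C^\infty(M)$, since a function $f$ on $U\subset M$ acts on $\Cal A^k(q^{-1}(U))$ by multiplication with the $L_\xi$--invariant function $f\o q$; such sheaves admit partitions of unity pulled back from $M$ and are therefore fine. Consequently the sheaf cohomology of $(q_*\Cal A^\bullet,d)$ is computed by its complex of global sections, which is exactly the invariant de--Rham complex $(\Om^\bullet(M_\#)^{L_\xi},d)$. Thus the statement reduces to the assertion that the invariant Rumin complex and the invariant de--Rham complex on $M_\#$ have the same cohomology.

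This is the $L_\xi$--invariant analogue of the fact that the Rumin complex computes the de--Rham cohomology of $M_\#$, and my strategy is to obtain it by running the spectral sequence of the contact--filtered de--Rham complex \emph{inside} $\ker(L_\xi)$. Concretely, I would equip $(\Om^\bullet(M_\#)^{L_\xi},d)$ with the filtration by $\Cal F_\bullet\cap\ker(L_\xi)$; since $d$ and $L_\xi$ commute this is a filtered complex, and its spectral sequence converges to the invariant de--Rham cohomology. The associated tensorial step is the restriction of $\partial$ to invariant sections, and I would use the Corollary in \ref{2.2} together with the second and fourth isomorphisms of Proposition \ref{2.3} to identify its cohomology with the bundles $\Cal H^i$ ($\Om^i_0(M)$ for $i\leq n$, $\Om^{i-1}_0(M)$ for $i>n$); the subsequent spectral--sequence operators are then precisely the $D_i$. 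Because the whole construction is literally the $L_\xi$--invariant part of the one carried out on $M_\#$, the very same bookkeeping that produces the Rumin operators there (in particular the degeneration responsible for the second--order operator $D_n$) applies verbatim. Convergence of the spectral sequence then identifies the cohomology of the invariant Rumin complex with the invariant de--Rham cohomology, which is what remained to be shown.

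The step I expect to be the main obstacle is the passage to the $E_1$--page, that is, checking that forming the invariant $\partial$--cohomology really produces the honest bundles $\Ga(\Cal H^\bullet_\#)^{L_\xi}$ rather than some proper subquotient. The functor $\ker(L_\xi)$ is only left exact, so a priori an invariant $\partial$--cohomology class need not admit an invariant representative, nor need an invariant coboundary be $\partial$ of an invariant form. This is exactly where Proposition \ref{2.3} is decisive: its explicit invariant lifts $q^*\ph$ and $\al\wedge q^*\ph$ furnish the required representatives, and they are available because the Levi bracket, the splittings of Proposition \ref{2.1}, and hence the decomposition into the subbundles $\La^r_0H^*$ are all $L_\xi$--equivariant (as established in the proof of Proposition \ref{2.2}(1) and its Corollary). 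Taking invariant sections of an equivariant direct sum of subbundles is exact, and this is precisely what makes $(-)^{L_\xi}$ commute with $\partial$--cohomology here. The only remaining delicate point is the identification of the abstract spectral--sequence differential at the middle degree with the genuinely second--order operator $D_n$, but this verification is identical to the corresponding one upstairs and requires no new input.
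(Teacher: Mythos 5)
Your proposal is correct and takes essentially the same route as the paper: the authors likewise restrict the contact filtration of the de--Rham complex to the subcomplex $\Cal A^*=\ker(L_\xi)$, run the associated spectral sequence, and use Proposition \ref{2.3} to identify the first page with the descended bundles and the induced differentials with the operators $D_i$. Your extra verifications --- fineness of $q_*\Cal A^k$ as a module over smooth functions on $M$, and the $L_\xi$--equivariant splittings ensuring that taking invariants commutes with forming the $\partial$--cohomology --- are legitimate elaborations of steps the paper leaves implicit.
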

\begin{proof}
The first part follows immediately from Proposition \ref{2.3} and the
corresponding results for the Rumin complex. For the last part, recall
that the Rumin complex arises from the de--Rham resolution on $M_\#$
via the spectral sequence associated to a (very simple)
filtration. This filtration restricts to $\Cal A^*$ and we can again
use the corresponding spectral sequence to compute the cohomology. But
Proposition \ref{2.3} shows that in the first step of this spectral
sequence one obtains sheaves that descend to $M$, and the operators on
these sheaves produced by the spectral sequence exactly form the
complex $(\Cal H^i,D_i)$.
\end{proof}

\subsection{Cohomology of the descended complex}\label{2.4}

To compute the cohomology of the descended complex, we have to
understand the complex $(q_*\Cal A^k,d)$ of sheaves. This again is
easy using Proposition \ref{2.3}. 

\begin{thm*}
For any open subset $U\subset M$ let us denote by $H^k(U)$ the $k$--th
de--Rham cohomology of $U$. Then the cohomology groups of the
complex $(\Cal H^*|_U,D)$ over $U$ fit into a long exact sequence
$$
\dots\to H^k(U) \to H^k(\Cal H^*|_U,D)\to H^{k-1}(U) \to
H^{k+1}(U)\to\dots 
$$ 
In this sequence, the connecting homomorphism $H^{k-1}(U) \to
H^{k+1}(U)$ is given by the wedge product with the cohomology class of
$\om|_U$, where $\om$ is the symplectic form on $M$ from Proposition
\ref{2.2}. 

In particular, for contractible $U$, the cohomology groups of $(\Cal
H^*,D)$ are isomorphic to $\mathbb R$ in degrees $0$ and $1$, while
all higher cohomologies vanish. On the other hand, if $\om$ is not
exact on $M$, then $H^k(\Cal H^*,D)\cong H^k(M)$ for $k=0,1$.  
\end{thm*}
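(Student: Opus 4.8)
The plan is to reduce the whole statement to the long exact sequence of a short exact sequence of complexes. By the preceding Corollary (applied over $U$), $H^k(\Cal H^*|_U,D)$ is the cohomology of the complex of sheaves $(q_*\Cal A^*,d)$ over $U$, and since by the last row of the table in Proposition \ref{2.3} the sheaf $q_*\Cal A^k$ is isomorphic to the sheaf of sections of $\La^kT^*M\oplus\La^{k-1}T^*M$, it is fine; hence this sheaf cohomology is computed by the complex of global $\xi$--invariant forms on $q^{-1}(U)$ equipped with the de--Rham differential. So I would work throughout with this latter complex, avoiding the Rumin operators $D_i$ altogether, and use the isomorphism $\Cal A^k(q^{-1}(U))\cong\Om^k(U)\oplus\Om^{k-1}(U)$, $(\ph_1,\ph_2)\mapsto q^*\ph_1+\al\wedge q^*\ph_2$, of Proposition \ref{2.3}.

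The first genuine step is to push the exterior derivative through this isomorphism. Using $d\al=q^*\om$ from Proposition \ref{2.2} together with $d\o q^*=q^*\o d$, one computes
\[
d(q^*\ph_1+\al\wedge q^*\ph_2)=q^*(d\ph_1+\om\wedge\ph_2)-\al\wedge q^*(d\ph_2),
\]
so that in the decomposition the differential becomes $(\ph_1,\ph_2)\mapsto(d\ph_1+\om\wedge\ph_2,-d\ph_2)$. This realizes the complex of invariant forms as the mapping cone of wedging with $\om$ on the de--Rham complex of $U$, and in particular yields the short exact sequence of complexes
\[
0\to(\Om^*(U),d)\xrightarrow{q^*}\bigl(\Cal A^*(q^{-1}(U)),d\bigr)\xrightarrow{i_\xi}(\Om^{*-1}(U),-d)\to 0,
\]
with inclusion $\ph_1\mapsto(\ph_1,0)$ and projection $(\ph_1,\ph_2)\mapsto\ph_2$. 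Both maps are cochain maps by the formula above, and the sequence is split in each degree, so exactness is immediate.

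The long exact sequence in cohomology of this short exact sequence is, after the identifications $H^k(\Om^*(U),d)=H^k(U)$ and $H^k(\Om^{*-1}(U),-d)=H^{k-1}(U)$ (the sign being irrelevant for cohomology), precisely the asserted sequence. To identify the connecting homomorphism $H^{k-1}(U)\to H^{k+1}(U)$ I would take a closed $\ph_2\in\Om^{k-1}(U)$, lift it to $(0,\ph_2)$, and apply the differential to obtain $(\om\wedge\ph_2,-d\ph_2)=(\om\wedge\ph_2,0)$; this lies in the image of the inclusion, so the connecting map sends $[\ph_2]$ to $[\om\wedge\ph_2]=[\om|_U]\wedge[\ph_2]$, i.e.\ it is the wedge product with the class of $\om|_U$.

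The remaining assertions follow by inspecting this sequence. For contractible $U$ one has $H^0(U)\cong\Bbb R$ and $H^k(U)=0$ for $k>0$, and $\om|_U$ is exact so all connecting maps vanish; reading off the sequence degree by degree gives $H^0\cong H^1\cong\Bbb R$ and vanishing in degrees $\geq2$. For the last claim take $U=M$ connected: since $H^{-1}(M)=H^{-2}(M)=0$ the sequence forces $H^0(\Cal H^*)\cong H^0(M)$ unconditionally, while the segment $0\to H^1(M)\to H^1(\Cal H^*)\to H^0(M)\xrightarrow{\wedge[\om]}H^2(M)$ shows $H^1(\Cal H^*)\cong H^1(M)$ exactly when $\wedge[\om]\colon H^0(M)\to H^2(M)$ is injective, which for connected $M$ means $[\om]\neq0$, that is, $\om$ is not exact. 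The only points requiring care are the bookkeeping of signs in the differential and the justification that the middle complex genuinely computes $H^*(\Cal H^*|_U,D)$; granting the fineness of $q_*\Cal A^k$, everything else is formal homological algebra.
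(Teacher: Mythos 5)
Your proposal is correct and takes essentially the same route as the paper: the identification of $H^*(\Cal H^*|_U,D)$ with the cohomology of the invariant forms $(\Cal A^*(q^{-1}(U)),d)$, the short exact sequence $0\to(\Om^*(U),d)\to(\Cal A^*(q^{-1}(U)),d)\xrightarrow{i_\xi}(\Om^{*-1}(U),d)\to 0$, and the computation of the connecting homomorphism via $d\al=q^*\om$ are exactly the paper's argument, and your reading off of the contractible and non--exact cases matches as well. Your explicit mapping--cone formula $(\ph_1,\ph_2)\mapsto(d\ph_1+\om\wedge\ph_2,-d\ph_2)$ with its sign bookkeeping, and the fineness justification for passing from the sheaf complex to global sections, are careful refinements of steps the paper leaves implicit (the cone formula appears there only later, in the twisted setting of Section \ref{3.3}).
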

\begin{proof}
By Corollary \ref{2.3}, the cohomology of $(\Cal H^*|_U,D)$ coincides
with the cohomology of $(\Cal A^*(q^{-1}(U)),d)$. Now on
$U_\#:=q^{-1}(U)$, let us denote by $i_\xi$ the insertion operator
defined by the vector field $\xi$. Applying Proposition \ref{2.3} to
$q|_{U_\#}:U_\#\to U$, we get a short exact sequence of complexes
$$
0\to (\Om^*(U),d)\to (\Cal A^*(q^{-1}(U)),d)
\overset{i_\xi}{\longrightarrow} (\Om^{*-1}(U),d)\to 0 .
$$
The corresponding long exact sequence in cohomology has the form
claimed in the theorem. To describe the connecting homomorphism,
consider a closed $(k-1)$--form $\ph$ on $U$. As a preimage in $\Cal
A^k(q^{-1}(U))$ under $i_\xi$, we can use $\al\wedge q^*\ph$. The
exterior derivative of this is $d\al\wedge q^*\ph=q^*(\om\wedge\ph)$
by Proposition \ref{2.1}. 

In particular, if $\om$ is exact, then all connecting homomorphisms
vanish, and one obtains a short exact sequence $0\to H^k(U)\to H^k(\Cal
H^*|_U,D)\to H^{k-1}(U)\to 0$ for each $k$. From this the claim on
contractible $U$ follows. 

Next, the long exact sequence immediately implies that $H^0(U)\cong
H^0(\Cal H^*|_U,D)$. If $\om$ is non--exact on $M$, then wedging with
$\om$ is an injection $H^0(M)\to H^2(M)$, so the long exact sequence
shows that $H^1(M)\cong H^1(\Cal H^*,D)$.
\end{proof}

\begin{remark*}
Of course, in the case of non--exact $\om$, one can get more out of
the long exact sequence than just the final claim of the theorem. We
have emphasized that last result only, because it is (for a much more
general family of complexes, but only in the case of the simply
connected space $\Bbb CP^n$) exactly what is need to prove the
integral geometry results of \cite{EG}.

A natural context to further exploit the long exact sequence in the
theorem would for example be the case of symplectic manifolds which
satisfy the hard Lefschetz theorem, like K\"ahler manifolds. In this
case, the wedge product by the cohomology class of $\om$ is injective
below half the dimension and surjective above, and the cohomology of
$(\Cal H^*,D)$ nicely relates to primitive cohomology. 
\end{remark*}

\section{The Rumin--Seshadri complex on general conformally symplectic
  manifolds}\label{3}

To conclude this article, we want to show how the quotient
construction from section \ref{2} can be used to obtain a complex on
general cs--manifolds and prove that this complex is intrinsic to the
conformally symplectic structure. Moreover, we again obtain an exact
sequence, which nicely relates its cohomology to de--Rham cohomology.

\subsection{Local contactifications}\label{3.1}
As indicated in section \ref{2} already, we view a cs--manifold as a
manifold $M$ of even dimension $2n\geq 4$ together with a line
subbundle $\ell\subset \La^2T^*M$ such that all non--zero elements of
$\ell$ are (point--wise) non--degenerate as bilinear forms and such
that $\ell$ admits local non--vanishing sections which are closed as
two--forms on $M$. Observe that if $\Om$ is a local closed
non--vanishing section of $\ell$ then any other local section of
$\ell$ is of the form $f\Om$ for a smooth function $f$. But then
$d(f\Om)=df\wedge\Om$ so since $\dim(M)>2$, this vanishes only if
$df=0$ by non--degeneracy of $\Om$. Hence these local non--vanishing
closed sections are determined up to a constant factor.

Given such a structure, each point $x\in M$ has an open
neighborhood $U$ in $M$ such that there are local sections of $\ell|_U$
which are exact as two forms on $M$. Then it is well known that such
neighborhoods can be realized as cs--quotients:

\begin{lemma*}
Let $(M,\ell)$ be a conformally symplectic manifold and let $U\subset
M$ be an open subset and let $\be\in\Om^1(U)$ be such that $d\be$ is a
section of $\ell$ which is nowhere vanishing on $U$. Put
$U_\#:=U\x\Bbb R$ and let $t$ be the standard coordinate on the second
factor.

Then $\al:=dt+q^*\be$ is a contact form on $U_\#$ such that
$q:=\pr_1:U_\#\to U$ is a cs--quotient with respect to the Reeb field
$\xi:=\partial_t$ of $\al$.
\end{lemma*}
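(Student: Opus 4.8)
The plan is to verify the contact condition by a direct computation and then check each item in the definition of a cs--quotient against the data at hand. First I would compute $d\al=d(dt+q^*\be)=q^*(d\be)$, using that $q=\pr_1$ is smooth and that $dt$ is closed. Since $d\be$ is a nowhere vanishing section of $\ell$, it is non--degenerate at each point of $U$, so $(d\be)^n$ is a volume form on the $2n$--dimensional manifold $U$. Expanding
$\al\wedge(d\al)^n=(dt+q^*\be)\wedge q^*((d\be)^n)$, the summand $q^*(\be\wedge(d\be)^n)$ is the pullback of a $(2n+1)$--form on $U$ and hence vanishes by dimension, leaving $dt\wedge q^*((d\be)^n)$, which is nowhere zero. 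This shows that $\al$ is a contact form on the $(2n+1)$--dimensional manifold $U_\#$.

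Next I would identify $\xi=\partial_t$ as the Reeb field of $\al$. Since $q_*\partial_t=0$, we get $\al(\partial_t)=dt(\partial_t)=1$ and $i_{\partial_t}d\al=i_{\partial_t}q^*(d\be)=0$, which are exactly the two defining properties of the Reeb field. The same two identities, together with Cartan's formula, give $L_\xi\al=i_\xi d\al+d(i_\xi\al)=0$, so the flow of $\xi$ preserves $\al$ and hence the contact distribution $H=\ker\al$; thus $\xi$ is an infinitesimal automorphism. It is transversal because $\al(\xi)=1$ forces $\xi(x)\notin H_x$ at every point.

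It then remains to check that $q$ is a quotient in the sense of \ref{2.2} and that the cs--structure it induces is $\ell|_U$. The map $q=\pr_1:U\x\Bbb R\to U$ is a surjective submersion whose fibers $\{x\}\x\Bbb R$ are connected, and the kernel of each tangent map $T_{(x,t)}q$ is exactly the line spanned by $\partial_t=\xi$; so all requirements of a quotient are met. Finally, by part (2) of Proposition \ref{2.2} there is a unique symplectic form $\om$ on $U$ with $q^*\om=d\al$. Since $d\al=q^*(d\be)$ and $q$ is a surjective submersion, uniqueness forces $\om=d\be$, a nowhere vanishing section of $\ell|_U$. As $\ell|_U$ is a line bundle, it is therefore spanned pointwise by $\om$, so the cs--structure produced by the quotient coincides with $\ell|_U$, as required.

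Since every step is a short direct verification, I do not anticipate a genuine obstacle; the only point demanding a little care is this last identification of the induced line subbundle with $\ell|_U$, where one must invoke the uniqueness in Proposition \ref{2.2}(2) to pin down $\om$ as $d\be$ rather than merely as some section of the induced cs--structure.
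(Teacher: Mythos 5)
Your proof is correct and follows essentially the same route as the paper's: compute $d\al=q^*d\be$, observe that $\al\wedge(d\al)^n=dt\wedge q^*(d\be)^n$ is a volume form, identify $\partial_t$ as the Reeb field, and note that $d\al$ descends to $d\be$, a section of $\ell|_U$. You merely make explicit some steps the paper labels ``clearly'' or ``obvious'' (the vanishing of $q^*(\be\wedge(d\be)^n)$ by dimension, the Reeb/infinitesimal-automorphism verification via Cartan's formula, and the uniqueness argument pinning down $\om=d\be$), which is fine.
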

\begin{proof}
We have $d\al=q^*d\be$ so this is non--degenerate on $TU\subset
TU_\#$, so clearly $dt\wedge (q^*d\be)^n=\al\wedge (d\al)^n$ is a
volume form on $U_\#$. Thus $\al$ is a contact form, and it is obvious
that its Reeb field $\xi$ equals $\partial_t$. This shows that $\xi$
spans the vertical subbundle of $q=\pr_1$ and by construction $d\al$
descends to the section $d\be$ of $\ell$.  
\end{proof}

There also is a local uniqueness result:

\begin{prop*}
Let $q:M_\#\to M$ and $\tilde q:\tilde M_\#\to \tilde M$ be
cs--quotients with respect to infinitesimal automorphisms $\xi\in\frak
X(M_\#)$ and $\tilde\xi\in\frak X(\tilde M_\#)$. Suppose that
$\ph:\tilde M\to M$ is a cs--diffeomorphism,
i.e.~$\ph^*(\ell)=\tilde\ell$.

Then for each point $\tilde x\in \tilde M$ and each point $\tilde
u\in\tilde M_\#$ with $\tilde q(\tilde u)=\tilde x$, there are open
neighborhoods $\tilde U$ of $\tilde x$ in $\tilde M$ and $\tilde U_\#$
of $\tilde u$ in $\tilde M_\#$ such that $\tilde q$ restricts to a
surjective submersion $\tilde U_\#\to\tilde U$ with connected fibers
and an immersion $\Ph:\tilde U_\#\to M_\#$ which restricts to a
contactomorphism onto its image and satisfies $q\o\Ph=\ph\o\tilde q$
and $\Ph^*\xi=\la\tilde \xi$ for some non--zero constant $\la$.
\end{prop*}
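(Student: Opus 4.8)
The plan is to realize $\Ph$ as a lift of $\ph$ that intertwines the two distinguished contact forms up to the constant conformal factor relating the symplectic forms. Recall from Proposition \ref{2.2} that on $M_\#$ there is a unique contact form $\al$ with $\al(\xi)=1$ and $i_\xi d\al=0$ (so $\xi$ is the Reeb field of $\al$ and $L_\xi\al=0$), together with a symplectic form $\om$ on $M$ satisfying $q^*\om=d\al$ and $\om\in\Ga(\ell)$; likewise $\tilde\al$, $\tilde\om$ on the tilded side. Since $\ph^*\ell=\tilde\ell$, the form $\ph^*\om$ is a closed nowhere--vanishing section of $\tilde\ell$, as is $\tilde\om$; by the uniqueness of such sections up to a constant established at the beginning of Section \ref{3.1} (which uses $\dim M>2$), we get $\ph^*\om=c\,\tilde\om$ for a nonzero constant $c$ near $\tilde x$. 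I set $\la:=1/c$ and claim it suffices to produce, near $\tilde u$, a smooth map $\Ph$ with $q\o\Ph=\ph\o\tilde q$ and $\Ph^*\al=c\,\tilde\al$.

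Indeed, suppose such a $\Ph$ exists. Then $d(\Ph^*\al)=c\,d\tilde\al$, so $\Ph^*\bigl(\al\wedge(d\al)^n\bigr)=c^{n+1}\,\tilde\al\wedge(d\tilde\al)^n$ is a volume form; hence $\Ph$ is a local diffeomorphism, in particular an immersion, and $\Ph^*\al=c\,\tilde\al$ forces $\Ph$ to carry $\ker\tilde\al=\tilde H$ to $\ker\al=H$, i.e.\ $\Ph$ restricts to a contactomorphism onto its open image. Comparing the Reeb field of $\Ph^*\al$, which is the pullback field $\Ph^*\xi$, with that of $c\,\tilde\al$, which is $\tfrac1c\tilde\xi$, yields $\Ph^*\xi=\tfrac1c\tilde\xi=\la\tilde\xi$, as required; restricting to a neighbourhood with connected $\tilde q$--fibres then finishes the reduction.

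To construct $\Ph$, I would first trivialize both quotients by flows. Choosing a local section $\tilde s\colon\tilde U\to\tilde M_\#$ of $\tilde q$ with $\tilde s(\tilde x)=\tilde u$ and a section $\si\colon\tilde U\to M_\#$ of $q$ along $\ph$ (i.e.\ $q\o\si=\ph$, obtained by composing $\ph$ with a local section of the submersion $q$ through a chosen point over $\ph(\tilde x)$), I set
$$
\Ph\bigl(\Fl^{\tilde\xi}_t(\tilde s(\tilde y))\bigr):=\Fl^{\xi}_{ct}\bigl(\si(\tilde y)\bigr),
$$
which is smooth on a neighbourhood $\tilde U_\#$ of $\tilde u$ after shrinking, satisfies $q\o\Ph=\ph\o\tilde q$ by construction, and conjugates the flow of $\tilde\xi$ to that of $c\,\xi$, so that $\Ph$ is $(\tilde\xi,c\xi)$--related and $\Ph_*\tilde\xi=c\,\xi$. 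The point of this normalization is that $i_{\tilde\xi}(\Ph^*\al-c\tilde\al)=0$, and, using relatedness together with $L_\xi\al=0=L_{\tilde\xi}\tilde\al$, also $L_{\tilde\xi}(\Ph^*\al-c\tilde\al)=0$. By the pullback characterization of Corollary 2.3 in \cite{BCG3}, $\Ph^*\al-c\tilde\al=\tilde q^*\ga$ for a one--form $\ga$ on $\tilde U$, and the curvature computation $\Ph^*d\al=\tilde q^*\ph^*\om=c\,d\tilde\al$ shows that $\ga$ is closed.

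The only remaining point, and the one requiring care, is to remove $\ga$. Shrinking $\tilde U$ to be contractible, I write $\ga=-c\,d\hat b$ by the Poincar\'e lemma and let $\ps_b$ be the fibrewise translation $p\mapsto\Fl^{\tilde\xi}_{\hat b(\tilde q(p))}(p)$. A short computation using $L_{\tilde\xi}\tilde\al=0$ and $\tilde\al(\tilde\xi)=1$ gives $\ps_b^*\tilde\al=\tilde\al+\tilde q^*d\hat b$, while $\ps_b$ commutes with the flow of $\tilde\xi$ and satisfies $\tilde q\o\ps_b=\tilde q$; hence replacing $\Ph$ by $\Ph\o\ps_b$ preserves $q\o\Ph=\ph\o\tilde q$ and the flow--conjugation, and produces $(\Ph\o\ps_b)^*\al=c\tilde\al+\tilde q^*\ga+c\,\tilde q^*d\hat b=c\,\tilde\al$. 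This $\Ph\o\ps_b$ is the desired lift. I expect the main obstacle to be precisely this descent--and--correction step: checking that the discrepancy $\Ph^*\al-c\tilde\al$ is the pullback of a \emph{closed} form on the base and that it can be absorbed by a fibrewise reparametrization; the reduction to the single identity $\Ph^*\al=c\tilde\al$ and the flow trivialization are then routine.
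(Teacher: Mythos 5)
Your argument is correct, and it takes a genuinely different route from the paper's proof. The paper puts \emph{both} contactifications into the normal form of Lemma \ref{3.1}: it picks a potential $\be\in\Om^1(U)$ with $d\be\in\Ga(\ell)$, rescales $\be$ by a constant so that $d\al=q^*d\be$, integrates the then closed one--form $\al-q^*\be$ to a function $f$ with $df(\xi)=1$, so that $(q,f)$ is a diffeomorphism onto $U\x(-\ep,\ep)$; repeating this with $\tilde\be=\ph^*\be$ (where no further rescaling is allowed, which is exactly where the constant enters, via $\la\tilde\al=\tilde q^*\tilde\be+d\tilde f$), it defines $\Ph:=(q,f)^{-1}\o(\ph,\id)\o(\tilde q,\tilde f)$, which is a diffeomorphism with $\Ph^*\al=\la\tilde\al$ by construction. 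You instead first build a flow--equivariant lift from arbitrary local sections, so that $T\Ph\cdot\tilde\xi=c\,\xi\o\Ph$ and $q\o\Ph=\ph\o\tilde q$ hold but $\Ph^*\al=c\tilde\al$ may fail, then show via the pullback criterion of \cite{BCG3} that the error $\Ph^*\al-c\tilde\al$ equals $\tilde q^*\ga$ for a closed one--form $\ga$, and remove $\ga$ by the fibrewise translation $\ps_b$. Both proofs rest on the same two ingredients --- uniqueness of closed non--vanishing sections of $\ell$ up to a constant (your explicit $\ph^*\om=c\,\tilde\om$ versus the paper's constrained rescaling of $\tilde\be$) and the Poincar\'e lemma (applied by the paper upstairs to $\al-q^*\be$, by you downstairs to $\ga$) --- but your organization buys a clean reduction (``any lift with $q\o\Ph=\ph\o\tilde q$ and $\Ph^*\al=c\tilde\al$ does the job'') plus a gauge--fixing step that never needs explicit product coordinates, while the paper's route is shorter and yields the explicit local model for free. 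Your individual computations check out: $i_{\tilde\xi}$ and $L_{\tilde\xi}$ annihilate $\Ph^*\al-c\tilde\al$ because of flow--relatedness and $L_\xi\al=0$ (and this naturality of $L$ holds for arbitrary smooth maps, so it applies before you know $\Ph$ is an immersion); $\Ph^*d\al=c\,d\tilde\al$ gives $d\ga=0$ since $\tilde q$ is a surjective submersion; and $\ps_b^*\tilde\al=\tilde\al+\tilde q^*d\hat b$ holds because $(\Fl^{\tilde\xi}_t)^*\tilde\al=\tilde\al$. Only two routine points deserve a word: normalize $\hat b(\tilde x)=0$ and shrink so that the translation $\ps_b$ stays inside the flow box around $\tilde u$; and note that your bookkeeping $\Ph^*\xi=\tfrac1c\tilde\xi=\la\tilde\xi$ matches the statement as written, whereas the paper's proof, with its $\la$, actually lands on $\tfrac1\la\tilde\xi$ --- an immaterial relabelling of the constant.
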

\begin{proof}
Given $\tilde x$, choose a contractible neighborhood $U$ of
$x=\ph(\tilde x)$ in $M$ such that there is $\be\in\Om^1(U)$ for which
$d\be$ is a section of $\ell$, which is nowhere vanishing on
$U$. Further, take any point $u\in q^{-1}(x)$ and let
$\al\in\Om^1(M_\#)$ be the unique contact form such that
$\al(\xi)=1$. Then by part (2) of Proposition \ref{2.2} $d\al$
descends to a locally non--vanishing closed section of
$\ell$. Replacing $\be$ by a constant multiple if necessary, we may
thus assume that $d\al$ descends to $d\be$. Since we also know that
$i_\xi d\al=i_\xi dq^*\be=0$, we conclude that $d(\al-q^*\be)=0$.

Possibly shrinking $U$, we can choose an open neighborhood $U_\#$ of
$u$ in $M_\#$ such that $q:U_\#\to U$ is a submersion with connected
fibers and such that $\al-q^*\be=df$ for some smooth $f:U_\#\to\Bbb R$
with $f(u)=0$. By construction $df(\xi)=1$, so $\ker(df)$ is always
transversal to $\ker(Tq)=\Bbb R\cdot\xi$. This implies that
$(q,f):U_\#\to U\x\Bbb R$ is a local diffeomorphism, so possibly again
shrinking $U$ and $U_\#$ we may assume that it is a diffeomorphism
$U_\#\to U\x (-\ep,\ep)$ for some $\ep>0$. 

Now we apply the same construction to the neighborhood $\ph^{-1}(U)$
of $\tilde x$ in $\tilde M$, the form $\tilde\be=\ph^*\be\in\Om^1(U)$,
and the point $\tilde u\in\tilde q^{-1}(\tilde x)$. Note however, that
we are not allowed to rescale $\tilde\be$ any more, so we can only
obtain $\la\tilde\al=\tilde q^*\tilde\be+d\tilde f$ for some non--zero
constant $\la$. Again we can arrange things in such a way that
$(\tilde q,\tilde f)$ is a diffeomorphism $\tilde U_\#\to \tilde U\x
(-\ep,\ep)$. 

Now define $\Ph:\tilde U_\#\to U_\#$ as $(q,f)^{-1}\o (\ph,\id)\o
(\tilde q,\tilde f)$. Then of course $\Ph$ is a diffeomorphism such
that $\Ph^*df=d\tilde f$ and $q\o\Ph=\ph\o\tilde q$ and thus
$\Ph^*q^*\be=\tilde q^*\tilde\be$. This shows that
$\Ph^*\al=\la\tilde\al$, so $\Ph$ is a contactomorphism and the Reeb
field $\xi$ of $\al$ is pulled back to the Reeb field of $\la\tilde\al$,
which is $\frac1\la\tilde\xi$. 
\end{proof}

\subsection{An intrinsic complex via local push downs}\label{3.2} 
The local existence and uniqueness results from \ref{3.1} are not
quite enough to construct an intrinsic sequence of differential
operators from Corollary \ref{2.3}. The point is that in the
uniqueness result Proposition \ref{3.1}, we do not get true
compatibility between the infinitesimal automorphism, but only
compatibility up to a non--zero constant multiple. Looking at the
first four isomorphisms in Proposition \ref{2.3}, which are used to
construct the descended complex, we see that the first two of them
remain unchanged if $\xi$ is rescaled by a non--zero constant. The other two
isomorphisms, however, involve the contact form $\al$ characterized by
$\al(\xi)=1$, so these change if $\xi$ changes.

We can easily correct this by using a slightly modified version of the
construction from \ref{2.3}. Consider the line bundle
$\ell\subset\La^2T^*M$ defining the conformally symplectic
structure. This can be viewed as an abstract line bundle on $M$, and
then we can consider the spaces $\Om^k(M,\ell)$ of $\ell$--valued
differential forms on $M$. Moreover, we have seen above that $\ell$
admits local sections which are closed as two--forms on $M$, and these
are uniquely determined up to a constant multiple. Thus we can define
a linear connection $\nabla$ on $\ell$ by requiring that these
sections are parallel for $\nabla$, which of course implies that
$\nabla$ is flat. In particular, twisting the exterior derivative by
$\nabla$ we obtain the twisted de--Rham complex
$(\Om^*(M,\ell),d^{\nabla})$.

Now suppose that $q:M_\#\to M$ is a cs--quotient with respect to a
transversal infinitesimal automorphism $\xi$ of $M_\#$. Then from the
proof of Proposition \ref{2.2} we see that if $\al$ is a contact form
on $M$ such that $\al(\xi)$ is constant, then $d\al$ descends to a
non--vanishing section of $\ell$. Any nonzero element in (a fiber of)
$\ell$ can be obtained in this way, so we conclude that $\xi$ gives
rise to a section $\si_\xi$ of the dual bundle $\ell^*\to M$ and thus
also to a (tensorial) map $\Om^k(M,\ell)\to\Om^k(M)$ which we denote
by the same symbol. Now we can modify Proposition \ref{2.3} and
construct an isomorphism between $\Om^k(M,\ell)$ and
$\ker(L_\xi)\subset\Ga(\La^kH^*\otimes Q^*)$ by mapping $\ph$ to
$\al\wedge q^*(\si_\xi(\ph))$, where again $\al$ is the unique contact
form such that $\al(\xi)=1$. Evidently, if we multiply $\xi$ by a
non--zero constant, then $\si_\xi$ gets multiplied by the same
constant while $\al$ gets divided by that constant, so the resulting
isomorphism stays the same. 

There is also a natural analog $\Om_0^k(M,\ell)$ of $\Om_0^k(M)$ for
$k\geq n$. In \ref{2.2}, we have introduced the canonical section
$\mathbf{\Om}$ of $\La^2T^*M\otimes\ell^*$. Wedging with
$\mathbf{\Om}$ can also be interpreted as a bundle map
$\La^kT^*M\otimes\ell\to\La^{k+2}T^*M$ for each $k\geq 0$, which is
surjective for $k\geq n-1$. For $k\geq n$, we denote by
$(\La^kT^*M\otimes\ell)_0$ the kernel of this bundle map and by
$\Om^k_0(M,\ell)$ the space of sections of this bundle. As in
Proposition \ref{2.3} it is clear that the isomorphism between
$\Om^k(M,\ell)$ and $\ker(L_\xi)\subset\Ga(\La^kH^*\otimes Q^*)$ from
above restricts to an isomorphism between $\Om^k_0(M,\ell)$ and the
kernel of $L_\xi$ in the subspace $\Ga(\La^k_0H^*\otimes Q^*)$ for
$k\geq n$.

Having all that in hand, we can construct a complex intrinsic to a
conformally symplectic structure:
\begin{thm*}
Let $(M,\ell)$ be a cs--manifold of dimension $2n$. Then there is a
differential complex $(\Cal H^*,D)$ which is intrinsically associated
to the cs--structure such that $\Cal H^i=\Om^i_0(M)$ for $i\leq n$
and $\Cal H^i=\Om^{i-1}_0(M,\ell)$ for $i>n$. The operators $D$ are
all of first order except for $D:\Om^n_0(M)\to\Om^n_0(M,\ell)$, which
has order two. 
\end{thm*}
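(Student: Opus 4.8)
The plan is a patching argument: construct the complex locally on contactifiable charts using Corollary \ref{2.3} (in the $\ell$--valued form developed just above), and then glue the local pieces using the uniqueness result Proposition \ref{3.1} together with the naturality of the Rumin complex under contactomorphisms. The whole content of the theorem is that these local constructions are independent of the choices involved and therefore descend to an intrinsic global object.

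First I would invoke the local existence Lemma \ref{3.1}: since $\ell$ admits local sections which are exact as two--forms, each point of $M$ has an open neighborhood $U$ that can be realized as a cs--quotient $q:U_\#\to U$ of a contact manifold by a transversal infinitesimal automorphism $\xi$. On each such $U$, Corollary \ref{2.3} produces a descended complex of differential operators whose underlying bundles are $\Om^i_0(M)$ for $i\leq n$ and $\Om^{i-1}_0(M,\ell)$ for $i>n$. The crucial observation is that, in the modified construction discussed before the theorem, the isomorphisms relating these intrinsic bundles to $\ker(L_\xi)$ upstairs are built from $\ph\mapsto q^*\ph|_{\La^i H}$ (for $i\leq n$) and from $\ph\mapsto\al\wedge q^*(\si_\xi(\ph))$ (for $i>n$); the former involves no choices at all, while the latter is unchanged when $\xi$ is rescaled by a non--zero constant, since then $\si_\xi$ is multiplied and $\al$ divided by that same constant. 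Thus the bundles $\Cal H^i$ are genuinely intrinsic to $(M,\ell)$, as are the stated orders: all operators are of first order except the degree--$n$ operator $\Om^n_0(M)\to\Om^n_0(M,\ell)$, which inherits order two from $D_n$ in Corollary \ref{2.3}.

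The heart of the argument is to show that the locally defined operators agree on overlaps. Given two contactifications $q:U_\#\to U$ and $\tilde q:\tilde U_\#\to U$ of the same open set, with infinitesimal automorphisms $\xi$ and $\tilde\xi$, I would apply Proposition \ref{3.1} with $\ph=\id_U$. After shrinking the neighborhoods as permitted there, this yields a contactomorphism $\Ph$ between suitable open pieces of $\tilde U_\#$ and $U_\#$ satisfying $q\o\Ph=\tilde q$ and $\Ph^*\xi=\la\tilde\xi$ for a non--zero constant $\la$. Since the Rumin complex is natural under contactomorphisms, $\Ph$ intertwines the Rumin operators upstairs. Because $L_{\la\tilde\xi}=\la L_{\tilde\xi}$ has the same kernel as $L_{\tilde\xi}$, the map $\Ph$ carries $\ker(L_{\tilde\xi})$ to $\ker(L_\xi)$, and the relation $q\o\Ph=\tilde q$ pushes the intertwining relation down to $U$. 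By the scaling invariance recorded above, the two descended complexes correspond under the identity of the intrinsic bundles, so they literally coincide on $U$.

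With this local agreement established, the locally defined operators patch to globally defined differential operators $D$ on the intrinsic bundles $\Cal H^*$, and the identity $D\o D=0$ holds globally because it holds on each chart. I expect the main obstacle to lie in the overlap step: one must track carefully how each of the descended operators transforms under $\Ph$ and verify that the constant $\la$ cancels in every one of the relevant isomorphisms of Proposition \ref{2.3}, paying particular attention to the junction $i=n$, where the underlying bundle changes from $\Om^n_0(M)$ to $\Om^n_0(M,\ell)$ and the operator jumps to second order. Once this bookkeeping is done, intrinsicness and the complex property follow directly.
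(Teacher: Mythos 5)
Your proposal is correct and follows essentially the same route as the paper's own (very terse) proof: local contactifications via Lemma \ref{3.1}, the descended complex from Corollary \ref{2.3} in its $\ell$--valued modification, and agreement on overlaps via Proposition \ref{3.1} applied with $\ph=\id$ together with naturality of the Rumin complex under contactomorphisms, with the constant $\la$ absorbed by the scaling invariance of the map $\ph\mapsto\al\wedge q^*(\si_\xi(\ph))$. The details you spell out (rescaling cancellation, $\ker(L_{\la\tilde\xi})=\ker(L_{\tilde\xi})$, patching and $D\circ D=0$) are exactly the points the paper settles in the discussion of \ref{3.2} preceding the theorem and then invokes implicitly.
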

\begin{proof}
Given $\ph\in\Cal H^i$ we consider the restriction $\ph|_U$ for an
open subset $U\subset M$ over which we find a local contactification
$q:U_\#\to U$. Over $U$ we can use the push down construction from
\ref{2.3} to obtain a section $D(\ph|_U)\in\Cal
H^{i+1}|_U$. Proposition \ref{3.1} and naturality of the Rumin complex
under contactomorphisms imply that this section is independent of the
choice of contactification and that the resulting local sections piece
together to a well defined element $D(\ph)\in\Cal H^{i+1}$.  By
construction, this defines a complex which is intrinsic to the
cs--structure.
\end{proof}

Of course, if we start with a symplectic structure, we can use the
symplectic form to trivialize the bundle $\ell$ and thus identity
$(\Om^*(M,\ell),d^{\nabla})$ with $(\Om^*(M),d)$. It is then easy to
check directly that the complex constructed here coincides with the
one from the last section of \cite{BEGN}, where it is called the
\textit{Rumin--Seshadri complex}, and we keep this name in the more
general setting of lcs--structures. That reference also contains some
information on the history of this complex and discusses the relations
of this complex to \cite{TY1,TY2}. In the situation of a general
conformally symplectic structure, one can fix a section of $\ell$ to
get to the setting of \cite{Eastwood}, and under the resulting
identification we obtain the complex constructed there. 

\subsection{Cohomology of the Rumin--Seshadri complex on
  conformally symplectic structures}\label{3.3}

To conclude our article, we show how the push--down methods can be
used to analyze the cohomology of the Rumin--Seshadri complex on
conformally symplectic manifolds. The result is essentially contained
in Theorem 2 of \cite{Eastwood}, apart from the fact that we do not
work with a fixed section of $\ell$. Our proof via the quotient
constuction is different, however, and we include it for completeness.

Consider the (tensorial) map $\Om^k(M,\ell)\to\Om^{k+2}(M)$ defined by
wedging with the canonical section
$\mathbf{\Om}\in\Om^2(M,\ell^*)$. If $\si_0$ is a locally
non--vanishing section of $\ell$ which is parallel for $\nabla$, then
by definition $d(\mathbf{\Om}\wedge\si_0)=0$, while for
$\ph\in\Om^k(M)$ we have
$d^{\nabla}(\ph\otimes\si_0)=d\ph\otimes\si_0$. Using these facts, one
immediately verifies that $d(\mathbf{\Om}\wedge\ps)=\mathbf{\Om}\wedge
d^{\nabla}\ps$ holds for all $\ps\in\Om^k(M,\ell)$. Otherwise put
$d^{\nabla^*}\mathbf{\Om}=0$, where $\nabla^*$ is the connection on
$\ell^*$ induced by $\nabla$. In particular, we can form the class
$[\mathbf{\Om}]$ in the twisted de--Rham cohomology group
$H^2(M,\ell^*)$ and wedging with this class defines a map
$H^k(M,\ell)\to H^{k+2}(M)$. Using this, we can formulate:

\begin{thm*}
Let $(M,\ell)$ be a conformally symplectic manifold. Then the
cohomology groups of the Rumin--Seshadri complex $(\Cal H^*,D)$ fit
into a long exact sequence of the form
$$
\dots \to H^k(M)\to H^k(\Cal H^*,D)\to H^{k-1}(M,\ell)\to
H^{k+1}(M)\to\dots,
$$
in which the connecting homomorphism $H^{k-1}(M,\ell)\to
H^{k+1}(M)$ is given by the wedge product with $[\mathbf{\Om}]\in
H^2(M,\ell^*)$. 
\end{thm*}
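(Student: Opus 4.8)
The plan is to reproduce the mechanism of the proof in \ref{2.4}, but intrinsically on $M$, since a single global contactification need not exist. First I would assemble the auxiliary complex that plays the role of $(q_*\Cal A^*,d)$ from \ref{2.4}. Concretely, set $\Cal B^k:=\Om^k(M)\oplus\Om^{k-1}(M,\ell)$ and equip it with the mapping--cone differential built from $d$, $d^\nabla$ and the chain map $\wedge\mathbf{\Om}$, i.e.~$(\ph_1,\ph_2)\mapsto(d\ph_1+\mathbf{\Om}\wedge\ph_2,\,\pm d^\nabla\ph_2)$, where $\mathbf{\Om}\wedge\ph_2\in\Om^{k+1}(M)$ uses the pairing $\ell^*\otimes\ell=\Bbb R$. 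That $\Cal B^*$ really is a complex is exactly the identity $d(\mathbf{\Om}\wedge\ps)=\mathbf{\Om}\wedge d^\nabla\ps$ (equivalently $d^{\nabla^*}\mathbf{\Om}=0$) recorded just before the theorem, which says that $\wedge\mathbf{\Om}$ is a chain map. This definition is manifestly intrinsic to $(M,\ell)$ and references no contactification. By construction there is a short exact sequence of complexes of (fine) sheaves on $M$,
$$
0\to(\Om^*(M),d)\to(\Cal B^*,d)\to(\Om^{*-1}(M,\ell),d^\nabla)\to 0,
$$
in which the first map is $\ph_1\mapsto(\ph_1,0)$ and the second is the projection onto the second summand.

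The crux of the argument --- and the step I expect to be the main obstacle --- is to identify the cohomology of the global complex $(\Cal B^*(M),d)$ with that of the Rumin--Seshadri complex $(\Cal H^*,D)$. Over an open set $U$ carrying a contactification $q:U_\#\to U$ from the Lemma in \ref{3.1}, the isomorphisms of \ref{3.2} (together with the last isomorphism of Proposition \ref{2.3}) identify $(\Cal B^*|_U,d)$ with the push--down $(q_*\Cal A^*|_U,d)$, and Corollary \ref{2.3} shows that this computes the same cohomology as $(\Cal H^*|_U,D)$. The delicate point is globalization: by Proposition \ref{3.1} two contactifications over overlapping regions differ by a contactomorphism $\Ph$ with $\Ph^*\xi=\la\tilde\xi$, and the isomorphisms of \ref{3.2} were designed precisely to absorb this rescaling, so --- using naturality of the Rumin complex under contactomorphisms, as in Theorem \ref{3.2} --- the local identifications assemble into a quasi--isomorphism of complexes of sheaves between $\Cal B^*$ and the intrinsic complex $\Cal H^*$. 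Since all sheaves involved are fine, hence acyclic, a quasi--isomorphism checked locally forces an isomorphism on the cohomology of global sections, giving $H^*(\Cal B^*(M),d)\cong H^*(\Cal H^*,D)$.

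With this in hand the theorem follows from the long exact sequence attached to the short exact sequence above, using $H^*(\Om^*(M),d)=H^*(M)$ and $H^*(\Om^*(M,\ell),d^\nabla)=H^*(M,\ell)$. It remains to identify the connecting homomorphism $H^{k-1}(M,\ell)\to H^{k+1}(M)$. Abstractly, for a mapping cone this connecting map is induced by the defining chain map, namely $\wedge[\mathbf{\Om}]$. To see this concretely, and to confirm the match with \ref{2.4}, I would run the zig--zag: a class in $H^{k-1}(M,\ell)$ is lifted to $(0,\ph)\in\Cal B^k$, and its differential $(\mathbf{\Om}\wedge\ph,\pm d^\nabla\ph)$ is read off in the first summand, yielding $\wedge[\mathbf{\Om}]$ on a $d^\nabla$--closed representative. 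Over a local contactification this is precisely the computation of \ref{2.4}: since $d\al=q^*\om$ and $\mathbf{\Om}$ corresponds to $\om\otimes\si_\xi$, one gets $d(\al\wedge q^*(\si_\xi\ph))=q^*(\mathbf{\Om}\wedge\ph)$ modulo the vertical term. Thus the connecting map is the wedge product with $[\mathbf{\Om}]\in H^2(M,\ell^*)$, and in the globally symplectic case it specializes to wedging with $[\om]$, recovering the statement of \ref{2.4}.
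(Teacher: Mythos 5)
Your proposal is, in substance, the paper's own proof. The paper works with exactly your complex: it sets $\tilde D(\ph,\ps)=(d\ph+\mathbf{\Om}\wedge\ps,-d^{\nabla}\ps)$ on $\Om^*(M)\oplus\Om^{*-1}(M,\ell)$, identifies this complex over a local contactification $q:U_\#\to U$ with $(\Cal A^*(q^{-1}(U)),d)$ via the rescaling--invariant isomorphism $(\ph,\ps)\mapsto q^*\ph+\al\wedge\si_\xi(\ps)$ from \ref{3.2}, and extracts the long exact sequence from the same short exact sequence of complexes, with the same zig--zag computation of the connecting homomorphism as $\wedge[\mathbf{\Om}]$.

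The one step where your formulation does not hold verbatim is the comparison of $H^*(\Cal B^*)$ with $H^*(\Cal H^*,D)$, which you correctly single out as the crux. You assert that the local identifications assemble into a ``quasi--isomorphism of complexes of sheaves'' between $\Cal B^*$ and $\Cal H^*$; but there is no chain map between these two complexes to be a quasi--isomorphism: $\Cal H^*$ arises from $\Cal B^*$ as a subquotient through the spectral sequence of a filtration (and $D_n$ is second order), so the local comparison in Corollary \ref{2.3} is an identification of cohomologies, not a morphism of complexes, and it cannot be glued as a sheaf map nor fed into the fineness argument as stated. The paper repairs this precisely along the lines your local analysis already contains: the isomorphisms of \ref{3.2}, Proposition \ref{3.1} and naturality of the Rumin complex under contactomorphisms glue the \emph{local filtrations} (rather than putative local quasi--isomorphisms) into a globally defined filtration of $(\Om^*(M)\oplus\Om^{*-1}(M,\ell),\tilde D)$, whose associated spectral sequence produces $(\Cal H^*,D)$ and hence gives $H^*(\Cal B^*)\cong H^*(\Cal H^*,D)$ directly, with no appeal to fine sheaves. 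Replacing your sheaf--theoretic gluing sentence by this gluing of filtrations makes your argument coincide with the paper's proof; everything else, including the intrinsic definition of $\Cal B^*$ via $d^{\nabla^*}\mathbf{\Om}=0$ and the identification of the connecting map, matches.
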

\begin{proof}
We continue modifying the results from Proposition \ref{2.3} as started
in \ref{3.2}. Given an open subset $U\subset M$ over which there is a
contactification, we can define an isomorphism
$\Om^k(U)\oplus\Om^{k-1}(U,\ell)\to\Cal A^k(q^{-1}(U))$ by
$(\ph,\ps)\mapsto q^*\ph+\al\wedge\si_\xi(\ps)$ (with notation as in
\ref{3.2}). This remains unchanged if $\xi$ is replaced by a constant
multiple. One immediately verifies that under this isomorphism, the
action of the exterior derivative on $\Cal A^k(q^{-1}(U))$ corresponds
to $(\ph,\ps)\mapsto (d\ph+\mathbf{\Om}\wedge\ps,-d^{\nabla}\ps)$, and
this defines a differential $\tilde D$ on
$\Om^*(U)\oplus\Om^{*-1}(U,\ell)$.

Combining this with the constructions of \ref{2.3} we get a filtration
of the complex $(\Om^*(U)\oplus\Om^{*-1}(U,\ell),\tilde D)$ such that
the associated spectral sequence produces the complex $(\Cal
H^*|_U,D)$, which thus computes the same cohomology. By naturality of
all the involved constructions, these fit together to define a
filtration on 
$$
(\Om^*(M)\oplus\Om^{*-1}(M,\ell),\tilde D)
$$ 
such that the associated spectral sequence produces $(\Cal H^*,D)$,
which thus computes the same cohomology. From this the result follows,
since there is an evident short exact sequence
$$
0\to (\Om^*(M),d)\to (\Om^*(M)\oplus\Om^{*-1}(M,\ell),\tilde D)\to
(\Om^{*-1}(M,\ell),-d^\nabla)\to 0
$$ for which the connecting homomorphism in the resulting long exact
sequence has the claimed form.
\end{proof}

\begin{bibdiv}
\begin{biblist}

\bib{BCG3}{book}{
   author={Bryant, R. L.},
   author={Chern, S. S.},
   author={Gardner, R. B.},
   author={Goldschmidt, H. L.},
   author={Griffiths, P. A.},
   title={Exterior differential systems},
   series={Mathematical Sciences Research Institute Publications},
   volume={18},
   publisher={Springer-Verlag},
   place={New York},
   date={1991},
   pages={viii+475},
   isbn={0-387-97411-3},
   review={\MR{1083148 (92h:58007)}},
}

\bib{BEGN}{article}{
   author={Bryant, Robert}, 
   author={Eastwood, Michael},
   author={Gover, A. Rod},
   author={Neusser, Katharina},
   title={Some differential complexes within and beyond parabolic
     geometry},  
   eprint={arXiv:1112.2142},
}

\bib{CS}{article}{
   author={Cahen, Michel},
   author={Schwachh{\"o}fer, Lorenz J.},
   title={Special symplectic connections},
   journal={J. Differential Geom.},
   volume={83},
   date={2009},
   number={2},
   pages={229--271},
   issn={0022-040X},
   review={\MR{2577468 (2011b:53045)}},
}

\bib{CD}{article}{
   author={Calderbank, David M. J.},
   author={Diemer, Tammo},
   title={Differential invariants and curved Bernstein-Gelfand-Gelfand
   sequences},
   journal={J. Reine Angew. Math.},
   volume={537},
   date={2001},
   pages={67--103},
   issn={0075-4102},
   review={\MR{1856258 (2002k:58048)}},
}

\bib{book}{book}{
   author={{\v{C}}ap, Andreas},
   author={Slov{\'a}k, Jan},
   title={Parabolic geometries. I. Background and general theory},
   series={Mathematical Surveys and Monographs},
   volume={154},
   publisher={American Mathematical Society},
   place={Providence, RI},
   date={2009},
   pages={x+628},
   isbn={978-0-8218-2681-2},
   review={\MR{2532439 (2010j:53037)}},
}

\bib{CSS-BGG}{article}{
   author={{\v{C}}ap, Andreas},
   author={Slov{\'a}k, Jan},
   author={Sou{\v{c}}ek, Vladim{\'{\i}}r},
   title={Bernstein-Gelfand-Gelfand sequences},
   journal={Ann. of Math.},
   volume={154},
   date={2001},
   number={1},
   pages={97--113},
   issn={0003-486X},
   review={\MR{1847589 (2002h:58034)}},
}

\bib{Eastwood}{article}{
   author={Eastwood, Michael},
   title={Extensions of the coeffective complex},
   journal={Illinois J. Math.},
   status={to appear}, 
   eprint={arXiv:1203.6714},
}
 
\bib{EG}{article}{
   author={Eastwood, Michael},
   author={Goldschmidt, Hubert},
   title={Zero-energy fields on complex projective space},
   journal={J. Differential Geom.},
   volume={94},
   date={2013},
   number={1},
   pages={129--157},
   issn={0022-040X},
   review={\MR{3031862}},
}

\bib{ES}{article}{
   author={Eastwood, Michael},
   author={Slov{\'a}k, Jan},
   title={Conformally Fedosov manifolds }, 
   eprint={arXiv:1210.5597},
}

\bib{Fox}{article}{
   author={Fox, Daniel J. F.},
   title={Contact projective structures},
   journal={Indiana Univ. Math. J.},
   volume={54},
   date={2005},
   number={6},
   pages={1547--1598},
   issn={0022-2518},
   review={\MR{2189678 (2007b:53163)}},
}

\bib{Ru}{article}{
   author={Rumin, Michel},
   title={Un complexe de formes diff\'erentielles sur les vari\'et\'es de
   contact},
   journal={C. R. Acad. Sci. Paris S\'er. I Math.},
   volume={310},
   date={1990},
   number={6},
   pages={401--404},
   issn={0764-4442},
   review={\MR{1046521 (91a:58004)}},
}
 
\bib{RuS}{article}{
   author={Rumin, Michel},
   author={Seshadri, Neil},
   title={Analytic torsions on contact manifolds},
   journal={Ann. Inst. Fourier (Grenoble)},
   volume={62},
   date={2012},
   number={2},
   pages={727--782},
   issn={0373-0956},
   review={\MR{2985515}},
}

\bib{TY1}{article}{
   author={Tseng, Li-Sheng},
   author={Yau, Shing-Tung},
   title={Cohomology and Hodge theory on symplectic manifolds: I},
   journal={J. Differential Geom.},
   volume={91},
   date={2012},
   number={3},
   pages={383--416},
   issn={0022-040X},
   review={\MR{2981843}},
}

\bib{TY2}{article}{
   author={Tseng, Li-Sheng},
   author={Yau, Shing-Tung},
   title={Cohomology and Hodge theory on symplectic manifolds: II},
   journal={J. Differential Geom.},
   volume={91},
   date={2012},
   number={3},
   pages={417--443},
   issn={0022-040X},
   review={\MR{2981844}},
}

\end{biblist}
\end{bibdiv}

\end{document}